\definecolor{darkblue}{rgb}{0,0,0.6}
\numberwithin{equation}{section}
\newtheorem{thm}{Theorem}[section]
\newtheorem{lem}[thm]{Lemma}
\newtheorem{conj}[thm]{Conjecture}
\newtheorem{prop}[thm]{Proposition}
\theoremstyle{definition}
\newtheorem{defn}{Definition}[section]
\newtheorem*{ack}{Acknowledgments}
\newtheorem*{con}{Conventions}
\newtheorem*{org}{Organisation}
\theoremstyle{remark}
\newtheorem*{rem}{Remark}
\newcommand{\Q}{\mathbb{Q}}
\newcommand{\R}{\mathbb{R}}
\newcommand{\Z}{\mathbb{Z}}
\newcommand{\im}{\operatorname{Im}}
\DeclareMathOperator{\Aut}{Aut}
\DeclareMathOperator{\bAut}{bAut}
\DeclareMathOperator{\Wh}{Wh}
\DeclareMathOperator{\BSTOP}{BSTOP}
\DeclareMathOperator{\BTOPSpin}{BTOPSpin}
\DeclareMathOperator{\TOPSpin}{TOPSpin}
\DeclareMathOperator{\coker}{coker}
\begin{document}
\title{Unknotting nonorientable surfaces of genus 4 and 5}

\author{Mark Pencovitch}
\address{School of Mathematics and Statistics, University of Glasgow, United Kingdom}
\email{m.pencovitch.1@research.gla.ac.uk}

\def\subjclassname{\textup{2020} Mathematics Subject Classification}
\expandafter\let\csname subjclassname@1991\endcsname=\subjclassname
\subjclass{
57K40. 
57N35. 
57K45 
}
\keywords{4-manifolds, nonorientable surfaces, modified surgery obstruction}

\begin{abstract}

Expanding on work by Conway, Orson, and Powell, we study the isotopy classes rel.\ boundary of nonorientable, compact, locally flatly embedded surfaces in \(D^4\) with knot group ~\(\Z_2\). In particular we show that if two such surfaces have fixed knot boundary \(K\) in ~\(S^4\) such that ~\(\vert \det(K) \vert =1\), the same normal Euler number, and the same nonorientable genus \(4\) or ~\(5\), then they are ambiently isotopic rel.\ boundary.

This implies that closed, nonorientable, locally flatly embedded surfaces in the 4-sphere with knot group \(\Z_2\) of nonorientable genus 4 and 5 are topologically unknotted. The proof relies on calculations, implemented in Sage, which imply that the modified surgery obstruction is elementary. Furthermore we show that this method fails for nonorientable genus \(6\) and \(7\).
\end{abstract}
\maketitle

\section{Introduction}


We are interested in studying surfaces embedded in \(4\)-manifolds. We can think of such surfaces as the one dimension higher analogue of knots embedded in \(3\)-manifolds. Indeed, the questions we want to answer for both objects are the same. When is a surface unknotted? Can we classify them in a meaningful way?

In this paper, we answer these questions for a small class of nonorientable surfaces.

\begin{defn}
    A \textit{\(\Z_2\)-surface in \(S^4\)} is a closed, connected, nonorientable surface \(F\) locally flatly embedded in \(S^4\) with knot group \(\pi_1(S^4\backslash F) \cong \Z_2\).

    A \textit{\(\Z_2\)-surface in \(D^4\)} is a properly, locally flatly embedded, compact, nonorientable surface \(Z\) with boundary a knot \(K\) in \(S^3\) and knot group \(\pi_1(D^4 \backslash Z) \cong \Z_2\). 

    The \textit{nonorientable genus} of a compact, connected, nonorientable surface \(F\) with at most one boundary component is given by \(\dim(H_1(F;\Z_2))\).

    The \textit{normal Euler number} of a \(\Z_2\)-surface \(F\) is the Euler number associated to the normal bundle of \(F\). If \(\partial F = K\), then this is the Euler number relative to the \(0\)-framing of \(K\).
\end{defn}

Let \(h\) be the nonorientable genus of a closed surface \(F\) locally flatly embedded in \(S^4\). It is a conjecture by Whitney, proved by Massey \cite{Massey}, that the normal Euler number \(e(F)\) is given by one of \(\{-2h, -2h+4, \dots, 2h-4, 2h \}\). Note that for a closed, connected, \textit{orientable} surface, locally flatly embedded in \(S^4\), the normal Euler number is always zero. With Massey's theorem in mind, We say that a nonorientable surface \(F\) locally flatly embedded in \(S^4\) with nonorientable genus \(h\) has \textit{extremal} normal Euler number if \(e(F) = \pm 2h\). It is shown in \cite[Lemma \(5.2\)]{og} that Massey's theorem can be reformulated in the case of non-empty boundary. A consequence of this is that we say a nonorientable surface \(F\) locally flatly embedded in \(D^4\) with nonorientable genus \(h\) has \textit{extremal} normal Euler number if \(\vert e(F)- 2\sigma(K) \vert = 2h\), where \(\sigma(K)\) denotes the signature of the boundary knot.

The following is our main theorem.

\begin{thm}\label{main}
    Let \(F_0,F_1\) be \(\Z_2\)-surfaces in \(D^4\) of nonorientable genus \(h\), boundary \(K\), and the same extremal normal Euler number. Assume that \(\vert \det(K) \vert =1\). If \(h = 4,5\) then \(F_0\) and \(F_1\) are ambiently isotopic rel.\ boundary.

    In particular, every \(\Z_2\)-surface in \(S^4\) with extremal normal Euler number and nonorientable genus \(4\) or \(5\) is topologically unknotted.
\end{thm}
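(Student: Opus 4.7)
The plan is to follow the modified surgery strategy of Conway--Orson--Powell, adapted to the nonorientable setting. To produce an ambient isotopy rel boundary from $F_0$ to $F_1$, it suffices to exhibit a homeomorphism of the exteriors $X_i = D^4 \setminus \nu(F_i)$ which is the identity on $\partial D^4$ and respects the normal bundle structure along $F_i$; uniqueness of topological normal bundles and the isotopy extension theorem then upgrade such a homeomorphism to an ambient isotopy. The closed case in $S^4$ follows by puncturing $F$ and applying the bounded statement with $K$ equal to the unknot (for which $|\det K| = 1$) against a standard unknotted model.

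The core argument classifies the exteriors up to homeomorphism rel boundary via Kreck's modified surgery. Both $X_0$ and $X_1$ have $\pi_1 \cong \Z_2$ with nontrivial orientation character $w_1$ (inherited from the nonorientability of the $F_i$), and they share the remaining bundle data forced by $h$, the common extremal normal Euler number, and the hypothesis $|\det K| = 1$. Together these pin down the normal $1$-type and show that $X_0, X_1$ admit compatible normal $1$-smoothings. Kreck's theorem then provides a compact $5$-dimensional normal bordism $W$ from $X_0$ to $X_1$ rel boundary, which I would surger below the middle dimension to make it $2$-connected in the appropriate equivariant sense. The remaining obstruction to turning $W$ into an $s$-cobordism is the modified surgery obstruction $\theta(W) \in l_5(\Z[\Z_2], w_1)$.

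The heart of the proof is to show that $\theta(W)$ is \emph{elementary} when $h = 4, 5$. The equivariant Hermitian intersection form on the middle homology of $W$, with its quadratic refinement, is an explicit form over $\Z[\Z_2]$ whose rank and isometry class are determined by $h$ and the hypotheses. To show the obstruction is elementary one writes this form down and searches for a direct-summand Lagrangian arising from a sequence of stabilizations by hyperbolic planes together with basis changes; this is the calculation the paper implements in Sage, an iterative search through an admissible finite set of candidate bases for a half-rank isotropic submodule of the correct type. Once $\theta(W)$ is elementary, $W$ can be surgered to an $s$-cobordism, and the topological $s$-cobordism theorem for the good group $\Z_2$ (Freedman--Quinn) delivers the desired homeomorphism $X_0 \cong X_1$ rel boundary.

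The main obstacle is the Lagrangian search itself: the rank of the form grows with $h$, and the combinatorics of candidate reductions blow up quickly, so the computation is only feasible for small $h$ unless one finds structural shortcuts. The fact that the same method fails at $h = 6, 7$ indicates that even for $h = 4, 5$ the search requires a carefully chosen parametrization of candidate Lagrangians, and I expect most of the real work to lie in setting up that parametrization and in verifying that the Sage output actually witnesses an elementary triple in Kreck's sense.
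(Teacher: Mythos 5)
Your overall scaffolding — modified surgery over the normal $1$-type, null-bordance, surgery to an $s$-cobordism via Kreck's theorem, the Freedman--Quinn $s$-cobordism theorem for the good group $\Z_2$, and deducing the closed case by puncturing against a standard model with the unknot as boundary — matches the paper. However, two points need correction, one of which is a genuine misunderstanding of the setup. You assert that the exteriors $X_i = D^4 \setminus \nu F_i$ carry a nontrivial orientation character $w_1$ ``inherited from the nonorientability of the $F_i$'', and place the obstruction in $\ell_5(\Z[\Z_2], w_1)$ with $w_1 \neq 1$. This is wrong: the exteriors are codimension-$0$ submanifolds of $D^4$, hence orientable, and in fact spin. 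The paper takes the normal $1$-type to be $B = \BTOPSpin \times B\Z_2$ and the relevant quadratic forms are eventually over $\Z$ with the trivial involution, not over a twisted group ring.

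More substantively, your description of the Sage computation as ``a search for a direct-summand Lagrangian \ldots an iterative search through an admissible finite set of candidate bases for a half-rank isotropic submodule'' over $\Z[\Z_2]$ does not correspond to what the paper does, and as stated would likely be intractable. The essential step you are missing is the Crowley--Sixt reduction. One first shows (via a bordism computation $\Omega_4(B,\xi) \cong \Omega_4^{\TOPSpin}(B\Z_2) \cong \Z$ and a signature argument, which your proposal also glosses over) that a $(B,\xi)$-nullbordism $W$ exists, then that the obstruction $\theta(W,\bar{\nu}_W)$ lies in a submonoid $\ell_5(V,2\theta)$ for an explicit nondegenerate quadratic form $(V,2\theta)$ over $\Z$, obtained by restricting to the $(-1)$-eigenspace. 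By Crowley--Sixt, $\ell_5(V,2\theta)$ is in bijection with the boundary automorphism set
\[
\bAut(V,2\theta) \;=\; \Aut\bigl(\partial(V,2\theta)\bigr) \big/ \Aut(V,2\theta) \times \Aut(V,2\theta),
\]
and elementarity reduces to showing that the boundary map $\partial\colon \Aut(V,2\theta) \to \Aut(\partial(V,2\theta))$ is surjective. What Sage actually verifies is exactly this surjectivity: it lists the finite automorphism group of a quadratic form over $\Z$, applies $\partial$ (inverse transpose in the chosen basis), independently enumerates $\Aut$ of the boundary split quadratic linking form by solving congruences coming from the boundary quadratic form and filtering by the linking form, and checks the two finite sets coincide. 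Without this reduction to finite computations with linking forms over $\Q/\Z$, the Lagrangian search you propose would not be practicable for $h = 4, 5$.
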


We say a \(\Z_2\)-surface in \(S^4\) is topologically \textit{unknotted} if it is isotopic to the connect sum of copies of \(\R P^2\) standardly embedded in \(S^4\). See \cite{Law} for a precise construction.

In a recent paper \cite[Theorem A]{og}, Conway-Orson-Powell proved the above results when ~\(F_0\) and \(F_1\) have non-extremal normal Euler number, or when \(h\leq 3\) in the extremal case. This, combined with our above result, is significant progress towards the following conjecture.

\begin{conj}\label{conj}
    Every \(\Z_2\)-surface in \(S^4\) is topologically unknotted.
\end{conj}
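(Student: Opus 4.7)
To tackle Conjecture~\ref{conj} in full generality, I would first observe that the results of \cite{og} together with Theorem~\ref{main} already reduce the problem to $\Z_2$-surfaces of extremal normal Euler number and nonorientable genus $h \geq 6$. So the task is to establish unknottedness in that remaining range, which is exactly the range in which the method of the present paper is proved to break down.

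The bottleneck identified here is that the subgroup of elementary elements of the modified surgery obstruction group $l_5(\Z[\Z_2], w_1)$ does not grow fast enough with $h$ to absorb the obstruction produced by an arbitrary normal bordism between two candidate exteriors. I would pursue two complementary refinements. First, I would replace the normal $1$-type by the normal $2$-type: in the extremal case the universal cover of the exterior of a $\Z_2$-surface is spin, and passing to the normal $2$-type should enlarge the elementary subgroup by admitting new bordism-level moves coming from the spin structure. Second, I would seek an internal stabilization theorem asserting that for every $\Z_2$-surface $F$, the connect sum $F \# \RP^2_{\mathrm{std}}$ with a standardly embedded $\RP^2$ of matching sign is already topologically unknotted. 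Paired with a destabilization result that removes a standardly embedded summand from an unknotted surface, this stabilization would reduce the whole conjecture to the finitely many low-genus cases now in hand.

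The main obstacle is the stabilization theorem itself. Geometrically, it asserts that the tubular neighborhood of the extra $\RP^2_{\mathrm{std}}$ summand provides enough room to perform the Whitney moves that straighten out the remaining piece of $F$. This is the nonorientable analogue of the stabilization theorems of Hosokawa--Kawauchi and Perron for orientable $2$-knots, but the equivariant intersection form on the universal cover now takes values in $\Z[\Z_2]$ with involution twisted by the orientation character, and Arf-type secondary invariants appear. I would first verify at the level of $L$-theory that the modified surgery obstruction becomes elementary after a single $\RP^2$-stabilization, by extending the Sage computation of this paper with one additional free generator and tracking how the quadratic refinement transforms. The genuine difficulty, which I expect to be the crux, is then geometrically realizing this algebraic vanishing as an explicit ambient isotopy: building the Whitney disks inside the stabilized exterior whose existence the $L$-theoretic calculation predicts, and doing so uniformly in $h$ so that the inductive destabilization step closes up.
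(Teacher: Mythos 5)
This statement is a conjecture, not a theorem: the paper offers no proof of it, and in fact proves only the extremal cases \(h=4,5\) (Theorem~\ref{main}), with the non-extremal cases and \(h\leq 3\) due to Conway--Orson--Powell. What you have written is a research programme, not a proof, so it cannot close the statement; I will therefore assess it as a programme.

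The programme has a concrete gap. Your key verification step --- that the modified surgery obstruction becomes elementary after a single \(\RP^2\)-stabilization, to be checked by extending the Sage computation ``with one additional free generator'' --- is contradicted by the computations reported in Section~\ref{sec:comm}: stabilizing an extremal genus-\(5\) surface by a standardly embedded \(\RP^2\) of matching sign produces an extremal genus-\(6\) surface, and the paper shows that \(\bAut(V,2\theta)\) is \emph{nontrivial} for \(h=6\) (the image of \(\partial\) has index two) and worse for \(h=7\) (index eight). Adding a generator to the quadratic form makes the elementary criterion fail, not succeed. Relatedly, your stabilization/destabilization scheme runs in the wrong direction: connect sum with \(\RP^2\) increases the nonorientable genus while preserving extremality, so it does not reduce the conjecture to ``the finitely many low-genus cases now in hand'' --- those are precisely the cases being stabilized away from --- and the destabilization statement you would need (that \(F\) is unknotted whenever \(F\#\RP^2\) is) is essentially as strong as the conjecture itself. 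Finally, for a \(5\)-dimensional cobordism Kreck's surgery theorem is formulated in terms of normal \(1\)-smoothings, so replacing the normal \(1\)-type by the normal \(2\)-type does not obviously yield a usable obstruction theory here. None of this refutes the conjecture, but as written the proposal does not supply a proof or a viable route to one.
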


We know that if a closed, connected, nonorientable surface \(F\) locally flatly embedded in \(S^4\) is topologically unknotted, then it has knot group \(\pi_1(S^4\backslash F) \cong \Z_2\). However we are yet to find a ~\(\Z_2\)-surface in \(S^4\) which is topologically knotted. Lawson \cite{Law2} proved the conjecture in the case of \(h=1\) and Kreck \cite{Kreck2} gave a proof for the case of \(h=10\) and normal Euler number \(16\).

This paper answers this conjecture in the cases of extremal normal Euler number and nonorientable genus \(4\) and \(5\). We follow the same method of proof as in \cite{og}, and obtain our results by using Sage to adapt and streamline the calculations presented in their appendix. 

Furthermore, we show that this method fails when \(h=6,7\), meaning that we cannot conclude anything by this strategy for these cases. We conjecture that the method also fails for \(h\geq 8\).

\begin{con} 
    As in \cite{og}, we work in the topological category with locally flat embeddings. We assume that all manifolds are compact, connected and based unless stated otherwise. If a manifold has nonempty boundary, then the basepoint is assumed to lie in the boundary.
\end{con}

\begin{org}
    In Section~\ref{sec:back} we give a brief background and outline of the proof given in \cite{og}. In Section~\ref{sec:calc} we go on to demonstrate this method by expanding on their calculations. Finally, in Section~\ref{sec:comm} we discuss possible generalisations and further research. 
\end{org}

\begin{ack}
 The author would like to thank Anthony Conway, Patrick Orson, and Mark Powell for their invaluable guidance, feedback, and patience.
\end{ack}

\section{Background}\label{sec:back}

Throughout this section, unless stated otherwise, assume that \(F_0\) and \(F_1\) are \(\Z_2\)-surfaces in \(D^4\) with nonorientable genus \(h\), extremal normal Euler number \(e\), and \(\partial F_i =K\) a fixed knot such that ~\(\vert \det(K) \vert =1\).

\subsection{The modified surgery obstruction}

The results in \cite{og} use Kreck's modified surgery with respect to the normal \(1\)-type of the exteriors of the \(\Z_2\)-surfaces. See \cite{Kreck, CrowSixt} for more details on the theory of this modified surgery. 

Let \(M_i\) denote the exterior of the \(\Z_2\)-surface \(F_i\) in \(D^4\) for \(i\in \{0,1\}\). These are spin \(4\)-manifolds with fundamental group \(\Z_2\). A choice of spin structure on \(M_i\) along with an identification ~\(\pi_1(M_i) \cong ~\Z_2\) determines a \(2\)-connected map \(\Bar{\nu}_i\colon M_i \to B\) such that \(B:= \BTOPSpin \times B\Z_2\). There exists a \(2\)-coconnected fibration \(\xi\colon B \to \BSTOP\) and we say that the pair \((B, \xi)\) is the \textit{normal \(1\)-type} of \(M_i\). The map \(\Bar{\nu}_i\) is called a \textit{normal \(1\)-smoothing} if it lifts the stable normal bundle ~\(M_i \to \BSTOP\) of \(M_i\).

Suppose \(M_0\) is homeomorphic to \(M_1\), via a homeomorphism that extends to a homeomorphism ~\(D^4\to D^4\) sending \(F_0\) to \(F_1\) and which restricts to the identity on \(S^3\). By the Alexander trick, we know that homeomorphisms of \(D^4\) that restrict to the identity on \(S^3\) are rel.\ boundary isotopic to the identity relative to the boundary. This implies that \(F_0\) and \(F_1\) are ambiently isotopic rel.\ boundary.

We can use Kreck's modified surgery to show that \(M_0\) and \(M_1\) are homeomorphic. However we need some extra conditions on the homeomorphism to ensure that it has the above properties and guarantees that the surfaces are ambiently isotopic rel.\ boundary.

\begin{defn}
    Denote by \(S(\nu F_i) = \partial\Bar{\nu}F_i \backslash \nu K \subseteq D^4\), the boundary of the closed tubular neighbourhood of the \(F_i\) minus a neighbourhood of the knot \(K\). A homeomorphism \(f\colon S(\nu F_0) \to S(\nu F_1)\) is called \textit{\(\nu\)-extendable} if it extends to a homeomorphism \(\Bar{\nu}F_0 \cong \Bar{\nu}F_1\) which sends \(F_0\) to \(F_1\). If \(f\) restricts to the identity \(S(\nu K) \to S(\nu K)\), then \(f\) is \textit{\(\nu\)-extendable rel.\ boundary} if \(f\) is \(\nu\)-extendable and the homeomorphism \(\Bar{\nu}F_0 \cong \Bar{\nu}F_1\) also restricts to the identity on the closed neighbourhood \(\Bar{\nu}K\).
\end{defn}

If \(f\) is \(\nu\)-extendable rel.\ boundary, then it also extends to a homeomorphism \(\partial M_0 \to \partial M_1\) which restricts to the identity on the knot exterior. For \(F_0\) and \(F_1\) to be ambiently isotopic rel.\ boundary, there must exist a homeomorphism \(S(\nu F_0) \to S(\nu F_1)\) which is \(\nu\)-extendable rel.\ boundary.

In \cite[Proposition \(6.3\)]{og} it was shown that if two \(\Z_2\)-surfaces in \(D^4\) have the same boundary knot, nonorientable genus, and normal Euler number, then there exists an orientation preserving homeomorphism \(f\colon \partial M_0 \to \partial M_1\) such that \(f\) restricts to a homeomorphism \(S(\nu F_0) \cong S(\nu F_1)\) which is \(\nu\)-extendable rel.\ boundary and the closed \(4\)-manifold \(M:= M_0 \cup_f -M_1\) is spin and has ~\(\pi_1(M) \cong \Z_2\).

We can use this \(M\) to construct a \((B, \xi)\)-cobordism rel.\ boundary between \(M_0\) and \(M_1\), which we can attempt to surger using modified surgery. In particular, in \cite[Proposition \(8.3\)]{og} it was shown that \((M_0,\Bar{\nu}_0)\) and \((M_0, \Bar{\nu}_0)\) are \((B,\xi)\)-cobordant using the above fact by showing that \(M\) has zero signature and using the Atiyah-Hirzebruch spectral sequence to calculate \[\Omega_4(B,\xi) \cong \Omega_4^{\TOPSpin} (B) \cong \Z.\] This implies that \(M\) is null-bordant over \((B,\xi)\) and thus we get a cobordism \((W, \Bar{\nu}_W)\) between the two surface exteriors.

Then, using the modified surgery technique in \cite[Theorem \(4\)]{Kreck}, since \(\Wh(\Z_2)=0\), we can try to surger the cobordism  \((W,\Bar{\nu}_W)\) to an \(s\)-cobordism. By the \(s\)-cobordism theorem  \cite[Theorem \(7.1\)A]{FQ}, and using the fact that the group \(\Z_2\) is \textit{good} in the sense of Freedman and Quinn, this means that the homeomorphism we defined on the boundary \(f\colon \partial M_0 \to \partial M_1\) extends to a homeomorphism of \(M_0\) and \(M_1\). Furthermore, since we ensured that \(f\) had a \(\nu\)-extendability condition, this homeomorphism extends to a homeomorphism \((D^4,F_0) \cong (D^4, F_1)\). Hence, by the argument previously outlined, we have that \(F_0\) and \(F_1\) are ambiently isotopic rel.\ boundary.

Thus, if we can perform surgery to improve \(W\) to an \(s\)-cobordism, we will have proven our main result. However, there is an obstruction, \(\theta(W, \bar{\nu}_W)\), in Kreck's \(\ell_5\) monoid to applying the surgery step of this argument. This must be \textit{elementary} (see \cite{Kreck}) before we can proceed.

\begin{thm}[Kreck \cite{Kreck}]\label{kreck}
If \(\theta(W, \bar{\nu}_W)\) is elementary then \((W, \bar{\nu}_W)\) is \((B, \xi)\)-bordant rel.\ boundary to an \(s\)-cobordism.   
\end{thm}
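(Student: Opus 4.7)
The plan is to carry out Kreck's two-stage modified surgery program on the $5$-dimensional cobordism $(W,\bar{\nu}_W)$: first make $\bar{\nu}_W$ highly connected so that the surgery kernel is concentrated in the middle dimension, and then use the elementary hypothesis on $\theta(W,\bar{\nu}_W)$ to realize a Lagrangian geometrically and surger it away, yielding an $s$-cobordism $(B,\xi)$-bordant to $W$ rel.\ boundary.

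For the first stage, I would exploit the $2$-coconnectedness of $\xi \colon B \to \BSTOP$ to arrange, by surgery in the interior of $W$ on embedded circles and $2$-spheres, that $\bar{\nu}_W$ is a $2$-equivalence. Each such surgery can be performed inside a $(B,\xi)$-bordism rel.\ boundary because the obstruction to extending the normal structure across the trace of the surgery lies in relative homotopy groups of $\xi$ that vanish in this range. After this step the surgery kernel is concentrated in the middle dimension, and the equivariant intersection form $\lambda$ on this kernel, refined by a self-intersection form $\mu$ coming from the spin structure on $W$, constitutes a nonsingular quadratic form over $\Z[\Z_2]$ whose class in the $\ell_5$ monoid is precisely $\theta(W,\bar{\nu}_W)$.

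For the second stage, the hypothesis that $\theta(W,\bar{\nu}_W)$ is elementary means that, possibly after stabilization by hyperbolic summands (which are realized geometrically by trivial modifications of $W$ that preserve the $(B,\xi)$-bordism class rel.\ boundary), there is an algebraic Lagrangian in the middle-dimensional kernel, i.e.\ a half-rank direct summand on which both $\lambda$ and $\mu$ vanish. I would lift a basis of this Lagrangian to a collection of framed, immersed $2$-spheres in the interior of $W$, and then invoke the Freedman--Quinn disc embedding theorem --- available because $\pi_1(W) = \Z_2$ is a good group --- to replace them by pairwise disjoint, locally flatly embedded $2$-spheres with trivial normal bundles. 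Surgery along these spheres kills the Lagrangian and, since the form was hyperbolic, its annihilator as well, leaving a cobordism $W'$ with trivial surgery kernel. Combined with $\Wh(\Z_2) = 0$, this forces $W'$ to be an $s$-cobordism that is $(B,\xi)$-bordant rel.\ boundary to $W$.

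The main obstacle, and the conceptual core of Kreck's theorem, is the geometric realization step: promoting an algebraic Lagrangian to a system of disjoint, locally flat embedded $2$-spheres. This is exactly the point at which the topological category and the goodness of $\Z_2$ are indispensable, via the disc embedding theorem. Without them one would at best obtain the analogous statement up to further stabilization in the smooth category, and the remainder of the argument --- surgery on embedded spheres in dimension $5$ and the $s$-cobordism theorem --- is comparatively formal given the groundwork already laid in the excerpt.
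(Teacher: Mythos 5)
The paper does not prove this statement; it is cited as Theorem~4 of Kreck's modified surgery paper \cite{Kreck}, so there is no in-text proof to compare against. Your two-stage outline --- surgery below the middle dimension to make $\bar{\nu}_W$ a $2$-equivalence, then use elementariness of $\theta(W,\bar{\nu}_W)$ to kill the middle-dimensional kernel --- is the correct shape of Kreck's argument. However, the step you identify as ``the conceptual core'' is misidentified, and this is a genuine error.

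The cobordism $W$ is $5$-dimensional, so the middle-dimensional surgery spheres are $2$-spheres in a $5$-manifold. Since $2+2 < 5$, general position alone gives pairwise disjoint, locally flatly embedded $2$-spheres; no Whitney trick, no disc embedding, and no goodness of $\pi_1$ is needed to realize the half-rank summand geometrically. The Freedman--Quinn disc embedding theorem concerns $2$-discs in $4$-manifolds, a different setting. The goodness of $\Z_2$ enters the paper's overall argument only at the next step, where the $s$-cobordism theorem \cite[Theorem~7.1A]{FQ} is applied to the $s$-cobordism that Kreck's theorem produces; the paper explicitly separates these two inputs. You have imported a hypothesis from that subsequent step into this theorem, where it does not belong, and in doing so have located the genuine difficulty of Kreck's proof in the wrong place. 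The actual content lies in tracking what the elementary condition in the monoid $\ell_5$ buys you algebraically after surgery below the middle, and in checking that the traces of the surgeries stay within the $(B,\xi)$-bordism class rel.\ boundary.

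A secondary, milder point: in the odd-dimensional case, the obstruction is not a nonsingular quadratic form admitting a Lagrangian after stabilization. Kreck's $\ell_{2q+1}$ is a monoid whose elements are quasi-formation-like data (roughly, a quadratic form together with a distinguished half-rank summand), and ``elementary'' is a specific condition on that structure rather than ``hyperbolic.'' Your description captures the moral content but not the actual definitions; since the paper treats this theorem as a black box this imprecision is forgivable, but worth noting.
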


Hence, if we can show that this obstruction is elementary in the extremal normal Euler number cases of \(h=4\) and \(h=5\), we will have proven Theorem~\ref{main}.

This obstruction is usually quite difficult to calculate, but the work of Crowley and Sixt \cite{CrowSixt} gives us a way around this complexity via linking forms. This means that proving the obstruction is elementary comes down to explicit calculations, as detailed in the Appendix of \cite{og}. To understand this result, we first need some definitions.

\subsection{Quadratic forms}

\begin{defn}
     Given an abelian group \(P\), thought of as a \(\Z\)-module, we have a map given by \[\top\colon \text{Hom}_R(P,P^*) \to \text{Hom}_R(P,P^*) \colon \phi \mapsto (x\mapsto(y\mapsto \phi(y)(x))).\]
    We then define the \textit{quadratic \(Q\)-group} as \(Q_+(P) := \coker(1-\top)\) and the \textit{symmetric \(Q\)-group} as ~\(Q^+(P) := \ker(1-\top)\).
\end{defn}

\begin{defn}
   A \textit{quadratic form} is defined as a pair \((P,\psi)\) such that \(P\) is a finitely generated free abelian group and \(\psi\in Q_+(P)\).

  \begin{enumerate}
        \item The \textit{symmetrisation} of a quadratic form \((P,\psi)\) is given by \((P,(1+\top)\psi)\).
        \item An \textit{isometry} of quadratic forms is an isomorphism \(h\colon(P_1,\psi_1)\to (P_2, \psi_2)\) such that \(h^*\psi_2h = \psi_1 \in Q_+(P)\).
  \end{enumerate} 
\end{defn}

\begin{defn}
    A \textit{symmetric linking form} over \(\Z\) is defined as a pair \((T,b)\) such that \(T\) is a finite abelian group and \(b\colon T\times T \to \Q/\Z\) is a symmetric bilinear form.
\end{defn}

\begin{defn}
    A \textit{split quadratic linking form} over \(\Z\) is defined as a triple \((T,b,\nu)\) where \((T,b)\) is a symmetric linking form and \(\nu\colon T\to \Q/\Z\) is a map such that
    \begin{enumerate}
        \item \(\nu(rx) = r^2\nu(x)\),
        \item \(\nu(x_1+x_2)- \nu(x_1) - \nu(x_2) = b(x_1,x_2)\)
    \end{enumerate}
    for all \(x,x_1,x_2\in T\) and all \(r\in \Z\).
\end{defn}

\begin{defn}
    Given a nondegenerate symmetric bilinear form \(\lambda\colon V\times V\to \Z\), with adjoint \(\widehat{\lambda}\), we can obtain a linking form via \[\partial\lambda\colon \coker(\widehat{\lambda})\times\coker(\widehat{\lambda}) \to \Q/\Z \colon ([x],[y]) \mapsto \frac{1}{s}y(z) \] such that \(sx = \widehat{\lambda}(z)\) for some \(s\in \Z \backslash \{0\}\) and some \(z\in V\). This is well-defined, independent of the choice of \(s\) and \(z\), and called the \textit{boundary linking form} of the symmetric form \(\lambda\).
\end{defn}

\begin{defn}
    Let \((V,\theta)\) be a nondegenerate quadratic form over \(\Z\) with symmetrisation given by \((V,\lambda)\). The \textit{boundary split quadratic linking form} of \((V,\theta)\) is the split quadratic linking form given by \[\partial(V,\theta) := ( \coker(\widehat{\lambda}), \partial\lambda,\partial\theta)\] where \(( \coker(\widehat{\lambda}), \partial\lambda)\) is the boundary linking form of the symmetric form \((V,\lambda)\) and \[\partial\theta\colon \coker(\widehat{\lambda}) \to \Q/\Z\colon ([x])\mapsto \frac{1}{s^2}\theta(z,z)\] for \(s\) and \(z\) as above.
\end{defn}

\begin{rem}
    Eventually, we will want to calculate some of these objects. Hence we note that:

    \begin{itemize}
        \item If a nondegenerate symmetric form \((V,\lambda)\) is represented by a size \(n\) matrix \(A\), then \[\partial\lambda([x],[y])=x^TA^{-1}y,\] where \(A^{-1}\) is the inverse of \(A\) over \(\Q\) and \([x],[y]\in \Z^n/A\Z^n\).
        \item  If a nondegenerate quadratic form \((V,\theta)\) is represented by a size \(n\) matrix \(Q\), so that its symmetrisation is represented by \(A = Q+ Q^T\), then \[\partial\theta([x])= x^T(A^{-1})^TQA^{-1}x = x^TA^{-1}QA^{-1}x\] for all \([x]\in \Z^n/A\Z^n\).
    \end{itemize}
\end{rem}

\begin{defn}
    The \textit{boundary} of an isometry \(h\colon (V_0,\theta_0)\to (V_1,\theta_1)\) is the isometry \[\partial h:= (h^*)^{-1}\colon \partial(V_0,\theta_0)\to \partial(V_1,\theta_1).\]
\end{defn}

This boundary isometry determines a homomorphism \(\partial\colon \Aut(V,\theta) \to \Aut(\partial(V,\theta))\) which will be the key to our calculations. Note that there is a left action of \(\Aut(V,\theta)\) on \(\Aut(\partial(V,\theta))\) given both by \(h \cdot f:= \partial h \circ f\) and \(g\cdot f := f \circ \partial g^{-1}\) such that \(f\in \Aut(\partial(V,\theta))\) and \(g,h\in \Aut(V,\theta)\). These actions combine to give a left action of  \(\Aut(V,\theta) \times \Aut(V,\theta) \) on \(\Aut(\partial(V,\theta))\).

\begin{defn}
    The \textit{boundary automorphism set} of a nondegenerate quadratic form \((V,\theta)\) is defined as the orbit set \[\bAut(V,\theta) := \Aut(\partial(V,\theta)) / \Aut(V,\theta) \times \Aut(V,\theta).\]
\end{defn}

This is the key object to our study of the surgery obstruction. The following result will let us explicitly calculate the cases where the obstruction is elementary.

\begin{thm}[{Conway-Orson-Powell \cite{og}}] \label{trivial}
    Let \(1 \leq h \leq 8\) be the nonorientable genus of the \(\Z_2\)-surfaces \(F_0\) and \(F_1\). Then the obstruction to modified surgery on any cobordism, \((W, \Bar{\nu}_W)\), over ~\((B,\xi)\), between the exteriors \(M_i\), is elementary if the set \(\bAut(V,2\theta)\) is trivial for the nondegenerate quadratic form \[(V, 2 \theta) = \left(\mathbb{Z}^h, \left[ \pm \begin{pmatrix} 4 & 4 & 4 &\dots & 4\\  & 2 & 2 & \dots & 2 \\  &  &2& \dots &2 \\ &  & &\ddots & \vdots \\ &  & &  &2 \end{pmatrix} \right] \right) \in Q_+(\mathbb{Z}^h). \]
\end{thm}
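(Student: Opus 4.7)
The plan is to reduce the modified surgery obstruction $\theta(W,\Bar{\nu}_W)\in\ell_5$ to a question about the orbit set $\bAut(V,2\theta)$, using the algebraic machinery of Crowley--Sixt~\cite{CrowSixt}. The first step is to identify the equivariant intersection form carried by the cobordism $(W,\Bar{\nu}_W)$. Since $W$ is a $(B,\xi)$-bordism rel.\ boundary between the two exteriors $M_i$, the $\Z[\Z_2]$-module $H_2(\widetilde{W};\Z)$ comes with an equivariant quadratic intersection form; restricting to an appropriate middle-dimensional summand and recording only the underlying integral data yields a nondegenerate integral quadratic form $(V,\psi)$ on a finitely generated free abelian group. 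I would first show that, under the hypotheses $\vert\det(K)\vert=1$, nonorientable genus $h$, extremal normal Euler number, and $1\le h\le 8$, the classification of $(B,\xi)$-cobordisms (controlled via the Atiyah--Hirzebruch spectral sequence computation $\Omega_4(B,\xi)\cong\Z$ used in the proof of \cite[Proposition 8.3]{og}) forces $(V,\psi)$, up to isometry, to be the explicit quadratic form $(V,2\theta)$ given in the statement.

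With the form identified, the next step is to invoke Crowley--Sixt's translation of Kreck's $\ell_5$ obstruction into linking-form data. Their result expresses $\theta(W,\Bar{\nu}_W)$ as a class in the set of isometries between boundary linking forms modulo the two commuting actions of $\Aut(V,2\theta)$ arising from applying automorphisms on either side of the cobordism; the resulting orbit set is precisely $\bAut(V,2\theta)$. Elementariness of $\theta(W,\Bar{\nu}_W)$ corresponds to the distinguished (trivial) orbit in $\bAut(V,2\theta)$, so if $\bAut(V,2\theta)$ is a single point then every such class is automatically the distinguished one, and hence elementary.

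Granting the two inputs above, the theorem follows immediately: the obstruction lives in $\bAut(V,2\theta)$, and if the target is a singleton there is nothing to check. The main obstacle I expect is the first step, namely identifying the equivariant intersection form with the explicit matrix. This requires a careful analysis of $H_2(\widetilde{W})$ together with its quadratic (spin) refinement, and accounting for how the extremal normal Euler number produces the factor of $4$ in the top-left entry while the remaining generators (coming from additional $\RP^2$ summands built into the exteriors) contribute the block of $2$'s along the diagonal, with the off-diagonal rank-one structure reflecting how these generators pair under intersection. The restriction $1\le h\le 8$ is precisely the range in which this normal form can be achieved; beyond that one expects the intersection form to acquire additional summands that no longer admit a single explicit presentation.
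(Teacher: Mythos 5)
Your overall skeleton --- translate Kreck's $\ell_5$-obstruction into linking-form data via Crowley--Sixt, identify the relevant quadratic form, and conclude from triviality of $\bAut(V,2\theta)$ --- is the right direction, and matches the spirit of what \cite{og} do. However, your proposal skips the crucial intermediate reduction that the paper relies on, and in doing so conflates two different rings. Kreck's obstruction $\theta(W,\Bar{\nu}_W)$ lives in $\ell_5(\Z[\Z_2])$, not over $\Z$. The argument in \cite{og} first shows (their Proposition $5.12$ and Lemma $9.12$, where the hypothesis $1\le h\le 8$ is actually used) that the obstruction lies in the sub-monoid $\ell_5(v,v)\subseteq\ell_5(\Z[\Z_2])$ for a specific quadratic form $v$ over the \emph{group ring} $\Z[\Z_2]$. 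Only then does one pass to the $-1$ eigenspace of the $\Z_2$-action to extract the integral form $(V,2\theta)$, and the step from ``$\ell_5(V,2\theta)$ is trivial over $\Z$'' to ``every obstruction in $\ell_5(v,v)$ over $\Z[\Z_2]$ is elementary'' is precisely \cite[Proposition~$9.15$]{og} --- it is not a formal consequence of Crowley--Sixt's bijection $\ell_5(V,\theta)\leftrightarrow\bAut(V,\theta)$, which is stated over $\Z$. Your proposal treats $(V,2\theta)$ as if it were directly the (reduced) equivariant intersection form of a particular $W$ and then applies the bijection; this misses the eigenspace decomposition and the nontrivial implication from $\Z$-triviality back to $\Z[\Z_2]$-elementariness.

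Two smaller points. First, your explanation of the hypothesis $1\le h\le 8$ (``the range in which this normal form can be achieved'') is speculative and does not reflect where the bound actually enters: it is an input to the rank/index bookkeeping in \cite[Proposition~$5.12$, Lemma~$9.12$]{og} that confines the obstruction to $\ell_5(v,v)$, not a constraint on normal forms of intersection matrices. Second, your heuristic identification of the matrix (``the extremal Euler number gives the $4$, the $\RP^2$ summands give the $2$'s'') is plausible as motivation but is stated without the analysis that actually pins down $(V,2\theta)$; since you flag this yourself as the main obstacle, note that in the paper this identification is an output of the eigenspace decomposition of $v$, so the gap described above is also where your identification step would need to be carried out.
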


This theorem is a summary of a lot of work in \cite{og} and is a sufficient formulation for our purposes. However it is worth speaking briefly about how this result is obtained.

By adapting work of \cite{Kreck2} in the framework of \cite{CrowSixt}, it is shown in \cite{og} (using their Proposition \(5.12\) and Lemma \(9.12\))  that if \(1 \leq h \leq 8\), the modified surgery obstruction lies in a subset of the monoid \(\ell_5(v,v) \subseteq \ell_5(\Z[\Z_2])\) defined on a specific quadratic form \(v\). They then show that it suffices to prove that the obstruction is elementary with respect to \(v\) restricted to the \(-1\) ~eigenspace. They show further that this obstruction lies in \(\ell_5(V,2\theta)\), where \((V,2\theta)\) is the nondegenerate quadratic form over \(\Z\) in the above theorem.

It is a result of Crowley and Sixt \cite[Section \(6.3\)]{CrowSixt} that if we have a nondegenerate quadratic form \((V,\theta)\) over \(\Z\) then there is a bijection between \(\ell_5(V,\theta)\) and \(\bAut(V,\theta)\). Then in \cite[Proposition \(9.15\)]{og}, it was proven that if \((V,\theta)\) is a nondegenerate quadratic form over \(\Z\) such that \(\ell_5(V,2\theta)\) is trivial, then every obstruction to modified surgery in the relevant subset of \(\ell_5\) is elementary.

Hence to show that the obstruction to modified surgery is elementary for any cobordism (as above), it suffices to show that the set \(\bAut(V,2\theta)\) is trivial.

\subsection{Calculating triviality} 
\ \\
We will show that \(\bAut(V,2\theta)\) is trivial using the following method.

\begin{itemize}
    \item First observe from the definition of \(\bAut(V,\theta)\) that an isometry \(\phi\in \Aut(\partial(V,\theta))\) is trivial in \(\bAut(V,\theta)\) if and only if \(\phi\) extends to an isometry of \((V,\theta)\).
    \item Hence we need to show that for every \(\phi\in \Aut(\partial(V,\theta))\), there is a corresponding \(\phi' \in \partial\Aut((V,\theta))\). We can do this by showing that the homomorphism 
    \[\partial\colon \Aut(V,\theta) \to \Aut(\partial(V,\theta))\]
    is surjective.
    \item As explained in \cite[Corollary \(7.42\)]{og} for a quadratic form \((V,\theta)\in Q_+(V)\) over \(\Z\) with symmetrisation \((V,\lambda)\), we have that \(\Aut(V,\theta) = \Aut(V,\lambda)\) as subsets of the automorphisms \(\Aut(V)\) of the group \(V\).
    \item Hence we will calculate \(\Aut(V,\lambda)\) with respect to some particular basis and show that the boundary map is surjective. Then we can conclude that \(\bAut(V,2\theta)\) is trivial.
\end{itemize}

We calculate this for each case \(h \in \{2,3,4,5\}\) in the next section, which allow us to prove our main result. 
\begin{proof}[Proof of Theorem~\ref{main} assuming calculations from Section \ref{sec:calc}]
    The calculations in the next section show that \(\bAut(V,2\theta)\) is trivial for the relevant quadratic forms associated to \(h=4\) and \(h=5\). Hence by Theorem~\ref{trivial}, the obstruction to modified surgery is elementary in these case. Therefore by Theorem~\ref{kreck}, and the argument above that theorem, we can conclude that \(F_0\) and \(F_1\) are ambiently isotopic rel.\ boundary.

    Now to prove the second part of the theorem, let \(F_0\) be a given \(\Z_2\)-surface in \(S^4\) with extremal Euler number and nonorientable genus \(4\) or \(5\). Puncturing this surface gives a \(\Z_2\)-surface \(F_0'\) in ~\(D^4\) with boundary component given by the puncture, thought of as the unknot. Let \(F_1\) denote the boundary connect sum of standardly embedded once-punctured copies of \(\R P^2\) with the same nonorientable genus and normal Euler number as \(F_0\), and hence as \(F_0'\). This is also a \(\Z_2\)-surface in \(D^4\) with boundary the unknot. Note that the unknot \(U\) is such that \(\vert \det(U) \vert =1\), so we can apply the first part of the theorem to \(F_0'\) and \(F_1\).

    In particular we get that \(F_0'\) and \(F_1\) are ambiently isotopic rel.\ boundary. We now fill back in the puncture and extend this isotopy using a constant isotopy in the removed \(D^4\), which gives an isotopy of \(F_0\) to the connect sum of copies of \(\R P^2\) standardly embedded in \(S^4\). This is precisely the definition of \(F_0\) being topologically unknotted.
\end{proof}

\section{Explicit Calculations}\label{sec:calc}

In this section, we perform the calculations needed in proving our main results using the method outlined above, for \(h\in \{2,3,4,5\}\). 
\begin{itemize}
    \item We generate a list of automorphisms of \((V,2\theta)\) and take \(\partial\) of these to obtain the image \(\partial(\Aut(V, 2\theta))\).
    \item We generate a list of automorphisms of \(\partial(V, 2\theta)\) by taking all matrices which fulfil the equations coming from the boundary quadratic form and reducing this list of matrices down to \(\Aut(\partial(V, 2\theta))\) by checking which ones also preserve the symmetric linking form.
    \item Finally, we check that this list is the same as the image, and conclude that the boundary map is surjective.
\end{itemize}

In \cite{og}, Sage is used to generate the isometries in the image, but the isometries in \(\Aut(\partial(V,\theta))\) are calculated by hand. As nonorientable genus increases, the number of potential automorphisms increases drastically, so this task becomes impossible. We use Sage to automate this step, and so can conclude results for higher values of \(h\).

We first reformulate the \(h=2\) and \(h=3\) cases already calculated in order to demonstrate our method, before proving the new \(h=4\) and \(h=5\) cases.

\subsection{Calculation for \(h=2\)}

\begin{prop}
Given the quadratic
form \[(V, 2 \theta) = \left(\mathbb{Z}^2, \left[ \pm \begin{pmatrix} 4 & 4 \\ 0 & 2\end{pmatrix} \right] \right) \in Q_+(\mathbb{Z}^2), \]
the set \(\bAut(V, 2\theta)\) is trivial.
\end{prop}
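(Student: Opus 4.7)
The plan is to follow the four-step recipe laid out at the end of Section~\ref{sec:back}. First, replace \(2\theta\) by its symmetrization, represented by the matrix
\[
\lambda \;=\; 2\theta + (2\theta)^T \;=\; \pm\begin{pmatrix} 8 & 4 \\ 4 & 4 \end{pmatrix},
\]
and observe that \(\Aut(V,2\theta) = \Aut(V,\lambda) \subseteq \GL_2(\Z)\) by the corollary cited from \cite{og}. Since both signs of \(\lambda\) define isomorphic isometry groups and isomorphic boundaries (via the identity on \(V\)), it is enough to treat one sign; I will fix the plus sign. The Smith normal form of \(\lambda\) has invariant factors \(4,4\), so \(\coker(\widehat{\lambda}) \cong \Z_4 \oplus \Z_4\) and \(\partial(V,2\theta)\) is a split quadratic linking form on this group whose pairing and quadratic refinement are given by the explicit formulas \(\partial\lambda([x],[y]) = x^T \lambda^{-1} y\) and \(\partial\theta([x]) = x^T \lambda^{-1}(2\theta)\lambda^{-1} x \pmod{\Z}\) from the remark in Section~\ref{sec:back}.

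Next I would enumerate \(\Aut(V,\lambda)\). Writing a candidate matrix as \(M = \bigl(\begin{smallmatrix}a & b \\ c & d\end{smallmatrix}\bigr)\), the equation \(M^T \lambda M = \lambda\) becomes a short system of three Diophantine equations in four integer unknowns, and the bound \(2a^2 + 2ac + c^2 = 2\) (together with the analogous equation for \(b,d\)) forces \(a,b,c,d \in \{-2,-1,0,1,2\}\) with very few valid combinations. Solving this by hand produces an explicit finite list of automorphisms of \((V,\lambda)\); their images under \(\partial\) give a subset of \(\Aut(\partial(V,2\theta))\).

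Then I would independently compute \(\Aut(\partial(V,2\theta))\). Any candidate is a matrix in \(\Aut(\Z_4 \oplus \Z_4)\) (of which there are finitely many, easily enumerated), and the additional constraints of preserving \(\partial\lambda\) and the quadratic refinement \(\partial\theta\) cut this down to a finite list. Because the target group is so small, this enumeration can be carried out either by hand or, more systematically, by the Sage routine described in Section~\ref{sec:calc}.

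Finally, I would compare the two lists and verify that \(\partial \colon \Aut(V,2\theta) \to \Aut(\partial(V,2\theta))\) is surjective, which by the observation in Section~\ref{sec:back} forces \(\bAut(V,2\theta)\) to be trivial. The only real obstacle is bookkeeping: making sure the list of boundary-form isometries is complete, so that no orbit is missed. For \(h=2\) the ambient group \(\Aut(\Z_4 \oplus \Z_4)\) is small enough that completeness can be checked by inspection, which is why this case serves as a sanity check for the Sage-assisted method that will be deployed in the higher-\(h\) propositions.
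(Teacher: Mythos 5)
Your proposal is correct and follows essentially the same route as the paper: symmetrize to $A = \begin{pmatrix} 8 & 4 \\ 4 & 4 \end{pmatrix}$, identify $\coker(\widehat\lambda) \cong \Z_4 \oplus \Z_4$, enumerate $\Aut(V,\lambda)$ and $\Aut(\partial(V,2\theta))$, and verify $\partial$ is surjective. The only cosmetic differences are that the paper passes to a particular nicer generating set of the cokernel before writing down $\partial\lambda$ and $\partial\psi$, and uses Sage for the enumeration throughout, whereas you observe (correctly) that the $h=2$ enumeration is small enough to be done by hand.
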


\begin{proof}
Note that for any quadratic form \((V,\theta)\), we have that \(\bAut(V,\theta) = \bAut(V,-\theta)\), so from now on (in every calculation) we will just work with the positive case. We begin by applying the formulas and definitions from the previous section to our particular situation. 

\begin{lem} In the case of \(h=2\), we have that:
\begin{enumerate}
    \item The quadratic form \(\psi := \left[ \begin{pmatrix} 4 & 4 \\ 0 & 2\end{pmatrix} \right] \) is represented by \( \widetilde{Q} = \begin{pmatrix} 2 & 0 \\ 0 & 2 \end{pmatrix}.\)
    \item The cokernel of the adjoint \(\widehat{\lambda}\) of the symmetrisation \(\lambda\) of \(\psi\) is isomorphic to \((\mathbb{Z}_4)^2.\)
    \item The boundary symmetric form is \(\partial\lambda\colon (\mathbb{Z}_4)^2 \times (\mathbb{Z}_4)^2 \to \mathbb{Q}/\mathbb{Z}\colon ([x],[y]) \mapsto x^T \begin{pmatrix} \frac{1}{4} & 0 \\ 0 & \frac{1}{4} \end{pmatrix} y.\)
    \item The boundary quadratic form is \(\partial\psi\colon (\mathbb{Z}_4)^2 \to \mathbb{Q}/\mathbb{Z}\colon \left(\left[\begin{smallmatrix} x_1 \\ x_2\end{smallmatrix}\right]\right) \mapsto \frac{1}{8}(x_1^2+x_2^2).\)
\end{enumerate}
\end{lem}

\begin{proof}
    Write \(Q = \begin{pmatrix} 4 & 4 \\ 0 & 2\end{pmatrix}\), and define \(A:= Q + Q^T\). Performing row and column operations on \(A\) one gets \(\coker(A)  = (\mathbb{Z}_4)^2\) with generators given by the classes of \((0,1), (1,1)\) with respect to the canonical basis for \(\coker(A)\).
    Under this generating set, the boundary symmetric linking form ~\(\partial\lambda([x],[y]) = x^T A^{-1} y\) is isometric to the pairing
    \[\partial\lambda([x],[y]) = x^T \begin{pmatrix} \frac{1}{4} & 0 \\ 0 & \frac{1}{4} \end{pmatrix} y.\]
    Under this generating set \(Q\) becomes: \[ \begin{pmatrix} 2 & 2 \\ -2 & 2\end{pmatrix} \thicksim \begin{pmatrix} 2 & 0 \\ 0 & 2\end{pmatrix} = \widetilde{Q}\] by the defining relation of \(Q_+(\mathbb{Z}^2)\). Finally, for \(x = \left[\begin{smallmatrix} x_1 \\ x_2\end{smallmatrix}\right]\) the boundary quadratic form is given by \[ \partial\psi(x) = x^T A^{-1}QA^{-1} x = \frac{1}{8}(x_1^2+x_2^2). \qedhere \]

\end{proof}

Using Sage (see Appendix), we list the automorphisms of \(\psi\), represented by \(\widetilde{Q}\), as matrices with coefficients in \((\Z_4)^2\). We find that the automorphism group \(\Aut(V,\psi)\) has \(8\) elements. From this, we use Sage to generate the elements of \(\im(\partial)\) for
\[\partial\colon \Aut(V,\psi) \to \Aut(\partial(V,\psi))\]
in the same basis. In particular, by the definition of \(\partial\), we can do this by taking inverse transpose of each element in \(\Aut(V,\psi)\). 

We now focus our attention on generating the set \(\Aut(\partial(V,\psi))\) and comparing it to the image. An automorphism \(f\) of \(\partial(V,\psi)\) is determined by its images on the canonical generators of \((\mathbb{Z}_4)^2\).
    \begin{itemize}
        \item For \((a,b) := f(1,0)\), the formula above gives: \[\frac{1}{8} = \frac{a^2+b^2}{8} \in \mathbb{Q}/\mathbb{Z}.\] So we get the first equation \[a^2+b^2 \equiv 1 \mod 8.\]
        \item For \((a,b) := f(0,1)\), the formula above gives: \[\frac{1}{8} = \frac{a^2+b^2}{8} \in \mathbb{Q}/\mathbb{Z}.\] So we get the second equation, which is also \[a^2+b^2 \equiv 1 \mod 8.\]
    \end{itemize}

Hence we use Sage to generate a list of matrices whose \(i^\text{th}\) column is a solution set of the \(i^\text{th}\) equation above (in this case \(i\in \{1,2\}\), in general we will have \(i\in \{1,\dots, h\}\)).  We reduce the coefficients of these matrices to have entries in \((\Z_4)^2\) and eliminate any resulting duplicates. Then we filter out the matrices which do not preserve the boundary symmetric linking form, and we are left with the set \(\Aut(\partial(V,2\theta))\).
    
Again using Sage, we compare the two lists we have generated, and find that \(\im(\partial) = \Aut(\partial(V,2\theta))\). This concludes the proof.
\end{proof}

\subsection{Calculation for \(h=3\)}

\begin{prop}
Given the quadratic
form \[(V, 2 \theta) = \left(\mathbb{Z}^3, \left[\pm \begin{pmatrix} 4 & 4 &4\\ 0 & 2&2 \\ 0&0&2\end{pmatrix} \right] \right) \in Q_+(\mathbb{Z}^3). \]
the set \(\bAut(V, 2\theta)\) is trivial.
\end{prop}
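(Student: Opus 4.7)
The plan is to carry out exactly the four-step template of the $h=2$ proof above, adapted to dimension $3$. As in that proof, $\bAut(V,2\theta) = \bAut(V,-2\theta)$, so it suffices to treat the positive case. Setting $Q = \begin{pmatrix} 4 & 4 & 4 \\ 0 & 2 & 2 \\ 0 & 0 & 2 \end{pmatrix}$ and $A = Q + Q^T$, I would first compute the Smith normal form of $A$ to identify $\coker(\widehat\lambda)$ and fix explicit generators for it, then use the defining relation of $Q_+(\Z^3)$ (matrices of the form $B - B^T$ are zero) to replace $Q$ by a simpler representative $\widetilde Q$ in the new basis, and finally write down the boundary forms $\partial\lambda(x,y) = x^T A^{-1} y$ and $\partial\psi(x) = x^T A^{-1} Q A^{-1} x$ explicitly as maps on $\coker(\widehat\lambda)$. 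This produces the $h=3$ analogue of the lemma that opens the $h=2$ proof.

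With these in hand, the remaining three steps can be run by the Sage routine used in the $h=2$ case. I would enumerate $\Aut(V,\widetilde Q)$ by searching over integer matrices $M$ with $M^T \widetilde Q M = \widetilde Q$ in $Q_+(\Z^3)$, then apply inverse transpose to obtain $\partial(\Aut(V,\widetilde Q))$ expressed in the chosen generators of $\coker(\widehat\lambda)$. To enumerate $\Aut(\partial(V,\widetilde Q))$, I use that every such $f$ is determined by its values $f(e_i)$ on the generators $e_i$ of $\coker(\widehat\lambda)$, each of which must satisfy the polynomial congruence $\partial\psi(f(e_i)) = \partial\psi(e_i)$. Sage enumerates all solutions to each such congruence, the Cartesian product gives candidate matrices, coefficients are reduced modulo the orders of the generators (discarding duplicates), and the resulting list is filtered by preservation of $\partial\lambda$. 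A final Sage comparison of the two sets confirms that $\partial$ is surjective, and hence that $\bAut(V,2\theta)$ is trivial.

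The main new subtlety relative to the $h=2$ case is that $\coker(\widehat\lambda)$ does not have uniform exponent: one computes $\det(A) = 32$ and the invariant factors of $A$ turn out to be $(2,2,8)$, so $\coker(\widehat\lambda) \cong \Z_2 \oplus \Z_2 \oplus \Z_8$ rather than a power of a single cyclic group. The generators of the cokernel therefore have different orders, so in the enumeration step the congruences coming from $\partial\psi$ live modulo different integers and the coefficient-reduction/deduplication has to respect this nonuniform structure. Beyond this bookkeeping, each step is the same routine Sage calculation as for $h=2$, and I expect no new conceptual difficulty.
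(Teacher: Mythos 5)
Your proposal follows the same four-step template as the paper's $h=2$ and $h=3$ arguments, and your preliminary computations check out: $\det(A) = 32$ and the invariant factors of $A = Q + Q^T$ are indeed $(2,2,8)$, so $\coker(\widehat\lambda) \cong \mathbb{Z}_8 \oplus (\mathbb{Z}_2)^2$. The only organizational difference is that the paper does not use the Smith normal form generators but rather the ad hoc generating set $(1,0,0),(0,1,1),(2,0,1)$ chosen to make the boundary linking form and the representative $\widetilde Q$ tidier; this is just bookkeeping and either choice works.

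There is one place where your account is vaguer than it should be, and it is precisely where the $h=3$ case becomes genuinely harder than $h=2$. After enumerating columns satisfying the $\partial\psi$-congruences and forming candidate matrices, you say you ``reduce coefficients modulo the orders of the generators.'' But $\mathbb{Z}_8 \oplus (\mathbb{Z}_2)^2$ is not a free module over a single $\mathbb{Z}_n$, so a $3\times 3$ matrix with the $(i,j)$ entry taken mod the order of the $i$th generator does \emph{not} automatically define an endomorphism: the entries in the $\mathbb{Z}_8$-row sitting above the $\mathbb{Z}_2$-columns must be divisible by $4$, and the reduction mod $2$ must be invertible. The paper addresses this exactly by invoking the explicit matrix description of $\Aut(\mathbb{Z}_8 \oplus \mathbb{Z}_2 \oplus \mathbb{Z}_2)$ from the reference it cites as Theorem $3.6$ of its source on automorphisms of finite abelian groups, and implements the corresponding filters in its Sage code. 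Your remark that ``coefficient-reduction has to respect this nonuniform structure'' gestures at this but does not resolve it; the filter by preservation of $\partial\lambda$, computed as a matrix identity over $\mathbb{Q}/\mathbb{Z}$, does not by itself exclude non-homomorphisms. It happens in this case that the congruence $\partial\psi(f(e_j)) = \partial\psi(e_j)$ for the order-$2$ generators already forces the $\mathbb{Z}_8$-coordinate of the image to lie in $\{0,4\}$, so the enumeration would in fact produce the correct set, but this is something to be verified, not assumed: without it the candidate list for $\Aut(\partial(V,2\theta))$ could be spuriously enlarged and the surjectivity comparison could fail for the wrong reason.
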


\begin{proof}
We use the same method as the previous case, and begin with a lemma.

\begin{lem} In the case of \(h=2\), we have that:
\begin{enumerate}
    \item The quadratic form \(\psi := \left[ \begin{pmatrix} 4 & 4 &4\\ 0 & 2&2 \\ 0&0&2\end{pmatrix} \right] \) is represented by \( \widetilde{Q} = \begin{pmatrix} 12 & 8 &-8\\ 0 & 2&-2 \\ 0&0&2 \end{pmatrix}.\)

    \item The cokernel of the adjoint \(\widehat{\lambda}\) of the symmetrisation \(\lambda\) of \(\psi\) is isomorphic to \(\mathbb{Z}_8 \oplus (\mathbb{Z}_2)^2.\)

    \item The boundary symmetric form is \[\partial\lambda\colon \mathbb{Z}_8 \oplus (\mathbb{Z}_2)^2\times \mathbb{Z}_8 \oplus (\mathbb{Z}_2)^2\to \mathbb{Q}/\mathbb{Z}\colon ([x],[y]) \mapsto x^T \begin{pmatrix} \frac{3}{8} & \frac{1}{2} &\frac{1}{2}\\ \frac{1}{2} & 0&\frac{1}{2} \\ \frac{1}{2}&\frac{1}{2}&0 \end{pmatrix}y.\] 

    \item The boundary quadratic form is \[\partial\psi\colon \mathbb{Z}_8 \oplus (\mathbb{Z}_2)^2 \to \mathbb{Q}/\mathbb{Z}\colon \left(\left[\begin{smallmatrix} x_1 \\ x_2 \\ x_3\end{smallmatrix}\right]\right) \mapsto \frac{3}{16}x_1^2+\frac{1}{2}(x_2^2+x_3^2+ x_1x_2 + x_1x_3+x_2x_3).\]
\end{enumerate}
\end{lem}

\begin{proof}
    Write \(Q = \begin{pmatrix} 4 & 4 &4\\ 0 & 2&2 \\ 0&0&2\end{pmatrix}\), and define \(A:= Q + Q^T\). Performing row and column operations on \(A\) one gets \(\coker(A)  = \mathbb{Z}_8 \oplus (\mathbb{Z}_2)^2\) with generators given by the classes of \((1,0,0), (0,1,1),(2,0,1)\) with respect to the canonical basis for \(\coker(A)\).
    Under this generating set, the boundary symmetric linking form \(\partial\lambda([x],[y]) = x^T A^{-1} y\) is isometric to the pairing
    \[\partial\lambda([x],[y]) = x^T \begin{pmatrix} \frac{3}{8} & \frac{1}{2} &\frac{1}{2}\\ \frac{1}{2} & 0&\frac{1}{2} \\ \frac{1}{2}&\frac{1}{2}&0 \end{pmatrix} y.\]
    Under this generating set \(Q\) becomes: \[ \begin{pmatrix} 12 & 8 &-4\\ 0 & 2&0 \\ -4&-2&2\end{pmatrix} \thicksim \begin{pmatrix} 12 & 8 &-8\\ 0 & 2&-2 \\ 0&0&2 \end{pmatrix} = \widetilde{Q}\] by the defining relation of \(Q_+(\mathbb{Z}^3)\). Finally, for \(x = \left[\begin{smallmatrix} x_1 \\ x_2 \\ x_3\end{smallmatrix}\right]\) the boundary quadratic form is given by \[ \partial\psi(x) = x^T A^{-1}QA^{-1} x = \frac{3}{16}x_1^2+\frac{1}{2}(x_2^2+x_3^2+ x_1x_2 + x_1x_3+x_2x_3). \qedhere \]

\end{proof}

In this case, the coefficients of the matrices representing these automorphisms are more complicated. As explained in \cite[Theorem \(3.6\)]{Coefish} and \cite{og}, we have a canonical isomorphism
\[\Aut(\Z_8\oplus\Z_2\oplus\Z_2) \cong \frac{\left\{A = \begin{pmatrix} a & 4b &4c\\ d &e&f \\ g&h&i \end{pmatrix} \in \text{M}_3(\Z) \Bigg{\vert} A \text{ mod } 2\in \text{GL}_3(\Z_2)\right\}}{\left\{\begin{pmatrix} 8a & 8b &8c\\ 2d &2e&2f \\ 2g&2h&2i \end{pmatrix} \Bigg{\vert} a,b,c,d,e,f,g,h,i\in\Z\right\}}\]
which we will use implicitly throughout this proof. We will also use this result (for different cokernel) implicitly in the next two cases as well. 

Using Sage, we list the automorphisms of \(\psi\), represented by \(\widetilde{Q}\), reducing the elements according the result above. We find that the automorphism group \(\Aut(V,\psi)\) has \(48\) elements. From this, we generate the elements of \(\im(\partial)\) in the same basis, as in our previous case.

We now focus our attention on generating the set \(\Aut(\partial(V,\psi))\) and comparing it to the image. An automorphism \(f\) of \(\partial\psi\) is determined by its images on the canonical generators of \(\mathbb{Z}_8 \oplus (\mathbb{Z}_2)^2\). Hence we proceed in the same way as the previous case.
    \begin{itemize}
        \item For \((a,b,c) = f(1,0,0)\), the formula above for the boundary quadratic form gives:
        \[ \frac{3}{16} = \frac{3}{16}x_1^2+\frac{8}{16}(x_2^2+x_3^2+ x_1x_2 + x_1x_3+x_2x_3) \in \mathbb{Q}/\mathbb{Z}.\]
        So we get the first equation \[3x_1^2+8(x_2^2+x_3^2+ x_1x_2 + x_1x_3+x_2x_3) \equiv 3 \mod 16.\]
        \item For \((a,b,c) = f(0,1,0)\), We get the second equation \[3x_1^2+8(x_2^2+x_3^2+ x_1x_2 + x_1x_3+x_2x_3)\equiv 8 \mod 16.\]
        \item Similarly for \((a,b,c) = f(0,0,1)\), We get the third equation \[3x_1^2+8(x_2^2+x_3^2+ x_1x_2 + x_1x_3+x_2x_3) \equiv 8 \mod 16.\]
    \end{itemize}
Hence we use Sage to generate a list of matrices whose \(i^\text{th}\) column is a solution set of the \(i^\text{th}\) equation above. We reduce the coefficients of these matrices according to the result above and eliminate any duplicates. Then we filter out the matrices which do not preserve the boundary symmetric linking form, and we are left with the set \(\Aut(\partial(V,2\theta))\). Again using Sage, we compare the two lists we have generated, and find that \(\im(\partial) = \Aut(\partial(V,2\theta))\). This concludes the proof.
\end{proof}

\subsection{Calculation for \(h=4\)}

\begin{prop}
Given the quadratic
form \[(V, 2 \theta) = \left(\mathbb{Z}^4, \left[\pm \begin{pmatrix} 4 & 4 &4&4\\ 0 & 2&2&2 \\ 0&0&2&2 \\ 0&0&0&2\end{pmatrix} \right] \right) \in Q_+(\mathbb{Z}^4). \]
the set \(\bAut(V, 2\theta)\) is trivial.
\end{prop}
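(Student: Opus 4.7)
The plan is to follow verbatim the template of the $h=2$ and $h=3$ cases already worked out in the preceding subsections. The first step is to state and prove a lemma providing the four structural ingredients needed downstream: a representative $\widetilde Q$ of the quadratic form in a convenient basis, the isomorphism type of $\coker(\widehat\lambda) = \coker(A)$ where $A := Q + Q^T$, the boundary symmetric linking form $\partial\lambda([x],[y]) = x^T A^{-1} y$, and the boundary quadratic form $\partial\psi([x]) = x^T A^{-1} Q A^{-1} x$, all expressed with respect to a distinguished generating set for $\coker(A)$ obtained via row and column reductions on $A$. Based on the $h=2$ cokernel $(\Z_4)^2$ and the $h=3$ cokernel $\Z_8 \oplus (\Z_2)^2$, I expect the $h=4$ cokernel to again take the form of a direct sum of one large cyclic factor with several copies of $\Z_2$, but its precise form is a direct matrix computation.

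Next, I would execute the three-step Sage computation described at the end of Section~\ref{sec:back}. First, enumerate $\Aut(V,\psi)$ as the finite set of integer $4 \times 4$ matrices preserving $\widetilde Q$ modulo the defining relation of $Q_+(\Z^4)$, and compute $\im(\partial)$ by taking inverse transposes and reducing into the cokernel basis. Second, enumerate $\Aut(\partial(V,\psi))$ by writing, for each canonical generator $e_i$ of $\coker(A)$, the congruence $\partial\psi(f(e_i)) \equiv \partial\psi(e_i)$ in $\Q/\Z$, solving each one exhaustively over $\coker(A)$, forming the Cartesian product of column solutions, reducing coefficients via the canonical description of $\Aut(\coker(A))$ of the type invoked in the $h=3$ case via \cite{Coefish}, deduplicating, and finally filtering by preservation of $\partial\lambda$. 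Third, compare the two lists and confirm equality, which gives surjectivity of $\partial\colon \Aut(V,\psi) \to \Aut(\partial(V,\psi))$ and hence triviality of $\bAut(V, 2\theta)$ by the bulleted argument preceding Section~\ref{sec:calc}.

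The main obstacle I anticipate is purely computational rather than conceptual. A single quadratic congruence in four unknowns valued in a cokernel of order around one hundred can have many solutions, so the naive Cartesian product of candidate columns can reach on the order of $10^7$ to $10^9$ matrices, and brute-force filtering by the linking-form condition at the end would be prohibitive. To keep the computation feasible, I would iterate each column search directly over $\coker(A)$ rather than over integer lifts, deduplicate immediately after each column is processed, and apply the symmetric-linking-form constraint column-by-column during construction rather than only at the end. If even this is too slow I would exploit the direct-sum decomposition of $\coker(A)$ to handle the large cyclic factor and the elementary abelian factor separately, in the spirit of \cite{og}. Once the two finite sets of automorphisms coincide, Theorem~\ref{trivial} delivers the proposition.
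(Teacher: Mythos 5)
Your proposal follows essentially the same approach as the paper: establish a lemma computing $\widetilde Q$, $\coker(A) \cong (\Z_4)^2 \oplus (\Z_2)^2$, $\partial\lambda$, and $\partial\psi$ in a distinguished cokernel basis, then use Sage to enumerate $\Aut(V,\psi)$ (which has $1024$ elements), take inverse transposes to get $\im(\partial)$, enumerate $\Aut(\partial(V,\psi))$ by solving the generator congruences and filtering by $\partial\lambda$, and verify the two sets coincide. The only small misprediction is that the cokernel has two $\Z_4$ summands rather than a single large cyclic factor, but as you note this is resolved by direct computation and does not affect the argument.
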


\begin{proof}
Once more, we begin with a lemma.

\begin{lem} In the case of \(h=4\), we have that:
\begin{enumerate}
    \item The quadratic form \(\psi := \left[ \begin{pmatrix}4 & 4 &4&4\\ 0 & 2&2&2 \\ 0&0&2&2 \\ 0&0&0&2\end{pmatrix} \right] \) is represented by \(\widetilde{Q} = \begin{pmatrix} 12&-28&8&-8\\ 0&20&-12&8 \\ 0&0&2&-2 \\ 0&0&0&2 \end{pmatrix}.\) 

    \item The cokernel of the adjoint \(\widehat{\lambda}\) of the symmetrisation \(\lambda\) of \(\psi\) is isomorphic to \((\mathbb{Z}_4)^2 \oplus (\mathbb{Z}_2)^2.\)

    \item The boundary symmetric form is \[\partial\lambda\colon (\mathbb{Z}_4)^2 \oplus (\mathbb{Z}_2)^2\times(\mathbb{Z}_4)^2 \oplus (\mathbb{Z}_2)^2\to \mathbb{Q}/\mathbb{Z}\colon ([x],[y]) \mapsto x^T \begin{pmatrix} \frac{1}{2}&\frac{1}{4}&0&\frac{1}{2}\\ \frac{1}{4}&\frac{1}{2}&0&0 \\ 0&0&0&\frac{1}{2} \\ \frac{1}{2}&0&\frac{1}{2}&0  \end{pmatrix}y.\] 

    \item The boundary quadratic form is \[\partial\psi\colon (\mathbb{Z}_4)^2 \oplus (\mathbb{Z}_2)^2 \to \mathbb{Q}/\mathbb{Z}\colon \left(\left[\begin{smallmatrix} x_1 \\ x_2 \\ x_3 \\ x_4\end{smallmatrix}\right]\right) \mapsto \frac{1}{4}x_1^2+\frac{1}{4}x_2^2+\frac{1}{4}x_1x_2+\frac{1}{2}(x_3^2+x_4^2+x_4x_1+x_4x_3).\]
\end{enumerate}
\end{lem}

\begin{proof}
    Write \(Q = \begin{pmatrix} 4 & 4 &4&4\\ 0 & 2&2&2 \\ 0&0&2&2 \\ 0&0&0&2 \end{pmatrix}\), and define \(A:= Q + Q^T\). Performing row and column operations on \(A\) one gets \(\coker(A)  = (\mathbb{Z}_4)^2 \oplus (\mathbb{Z}_2)^2\) with generators given by the classes of \((2,1,1,1), (1,1,1,1),(0,2,1,1),(2,1,1,0)\) with respect to the canonical basis for \(\coker(A)\).
    Under this generating set, the boundary symmetric linking form \(\partial\lambda([x],[y]) = x^T A^{-1} y\) is isometric to the pairing
    \[\partial\lambda([x],[y]) = x^T \begin{pmatrix} \frac{1}{2}&\frac{1}{4}&0&\frac{1}{2}\\ \frac{1}{4}&\frac{1}{2}&0&0 \\ 0&0&0&\frac{1}{2} \\ \frac{1}{2}&0&\frac{1}{2}&0  \end{pmatrix} y.\]
    Under this generating set \(Q\) becomes: \[\begin{pmatrix} 12&-16&6&-2\\ -12&20&-8&0 \\ 2&-4&2&0 \\ -6&8&-2&2\end{pmatrix} \thicksim \begin{pmatrix} 12&-28&8&-8\\ 0&20&-12&8 \\ 0&0&2&-2 \\ 0&0&0&2 \end{pmatrix} = \widetilde{Q}\] by the defining relation of \(Q_+(\mathbb{Z}^4)\). Finally, for \(x = \left[\begin{smallmatrix} x_1 \\ x_2 \\ x_3 \\ x_4\end{smallmatrix}\right]\) the boundary quadratic form is given by \[ \partial\psi(x) = x^T A^{-1}QA^{-1} x = \frac{1}{4}x_1^2+\frac{1}{4}x_2^2+\frac{1}{4}x_1x_2+\frac{1}{2}(x_3^2+x_4^2+x_4x_1+x_4x_3). \qedhere \]

\end{proof}
Using Sage, we list the automorphisms of \(\psi\), represented by \(\widetilde{Q}\), reducing the elements according to \cite[Theorem \(3.6\)]{Coefish}. We find that the automorphism group \(\Aut(V,\psi)\) has \(1024\) elements. From this, we generate the elements of \(\im(\partial)\) in the same basis, as in our previous case.

We now focus our attention on generating the set \(\Aut(\partial(V,\psi))\) and comparing it to the image. An automorphism \(f\) of \(\partial\psi\) is determined by its images on the canonical generators of \((\mathbb{Z}_4)^2 \oplus (\mathbb{Z}_2)^2\). Hence we proceed in the same way as the previous cases.
    \begin{itemize}
        \item For \((a,b,c,d) = f(1,0,0,0)\), the formula above for the boundary quadratic form gives:
        \[ \frac{1}{4} = \frac{1}{4}x_1^2+\frac{1}{4}x_2^2+\frac{1}{4}x_1x_2+\frac{2}{4}(x_3^2+x_4^2+x_4x_1+x_4x_3) \in \mathbb{Q}/\mathbb{Z}.\]
        So we get the first equation \[x_1^2+x_2^2+x_1x_2+2(x_3^2+x_4^2+x_4x_1+x_4x_3) \equiv 1 \mod 4.\]
        \item Similarly for \((a,b,c,d) = f(0,1,0,0)\), we get the second equation \[x_1^2+x_2^2+x_1x_2+2(x_3^2+x_4^2+x_4x_1+x_4x_3) \equiv 1 \mod 4.\]
        \item For \((a,b,c,d) = f(0,0,1,0)\), we get the third equation \[x_1^2+x_2^2+x_1x_2+2(x_3^2+x_4^2+x_4x_1+x_4x_3) \equiv 2 \mod 4.\]
        \item For \((a,b,c,d) = f(0,0,0,1)\), we get the fourth equation \[x_1^2+x_2^2+x_1x_2+2(x_3^2+x_4^2+x_4x_1+x_4x_3) \equiv 2 \mod 4.\]
        
    \end{itemize}
Hence we use Sage to generate a list of matrices whose \(i^\text{th}\) column is a solution set of the \(i^\text{th}\) equation above. We reduce the coefficients of these matrices according to \cite{Coefish} and eliminate any duplicates. Then we filter out the matrices which do not preserve the boundary symmetric linking form, and we are left with the set \(\Aut(\partial(V,2\theta))\). Again using Sage, we compare the two lists we have generated, and find that \(\im(\partial) = \Aut(\partial(V,2\theta))\). This concludes the proof. \qedhere
\end{proof}

\subsection{Calculation for \(h=5\)}

\begin{prop}
Given the quadratic
form \[(V, 2 \theta) = \left(\mathbb{Z}^5, \left[\pm \begin{pmatrix} 4 & 4 &4&4&4\\ 0 & 2&2&2&2 \\ 0&0&2&2&2 \\ 0&0&0&2&2 \\ 0&0&0&0&2\end{pmatrix} \right] \right) \in Q_+(\mathbb{Z}^5). \]
the set \(\bAut(V, 2\theta)\) is trivial.
\end{prop}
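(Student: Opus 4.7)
The plan is to follow the same four-step recipe that was used for $h\in\{2,3,4\}$, applied to the $5\times 5$ matrix $Q$. First I would prove the analogue of the preceding structural lemma: starting from $Q$, form $A:=Q+Q^T$ and perform row and column operations to bring it to Smith normal form, which determines $\coker(A)$ together with an explicit generating set written in the canonical basis. Given the pattern of the previous cases ($(\Z_4)^2$ for $h=2$, $\Z_8\oplus(\Z_2)^2$ for $h=3$, $(\Z_4)^2\oplus(\Z_2)^2$ for $h=4$), this is a routine finite computation. With the generators in hand, the boundary linking form $\partial\lambda([x],[y])=x^T A^{-1} y$, the reduced representative $\widetilde{Q}$ of $\psi$ (obtained by rewriting $Q$ in the new basis and applying the $Q_+(\Z^5)$ relation to clear the strictly lower-triangular entries), and the boundary quadratic form $\partial\psi(x)=x^T A^{-1}Q A^{-1} x$ can all be written down explicitly.

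Next, the Sage calculation proceeds exactly as in the $h=4$ case. Using the analogue of the parameterization from \cite[Theorem $3.6$]{Coefish} corresponding to the cokernel obtained above for $\Aut(\coker(A))$, I would enumerate $\Aut(V,\psi)=\Aut(V,\lambda)$ as the matrices preserving $\widetilde{Q}$ modulo the quadratic form equivalence, and then take inverse transposes to obtain $\im(\partial)$ inside $\Aut(\partial(V,\psi))$. On the other side, each automorphism $f\in \Aut(\partial(V,\psi))$ is determined by its images on the five canonical generators of $\coker(A)$, and each image must satisfy the single congruence $\partial\psi(f(e_i))\equiv \partial\psi(e_i)\in \Q/\Z$; I would enumerate all column vectors solving the $i$-th such congruence, assemble them into $5\times 5$ matrices, reduce modulo the relations of $\Aut(\coker(A))$, and then filter by the condition that $\partial\lambda$ be preserved. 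Comparing the two resulting lists should give $\im(\partial)=\Aut(\partial(V,2\theta))$, which by the strategy outlined at the end of Section~\ref{sec:back} yields triviality of $\bAut(V,2\theta)$.

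The main obstacle is purely computational. The order of $\Aut(V,\psi)$ has grown sharply across the previous cases ($8$, $48$, $1024$), and the naive search space for candidate matrices in $\Aut(\partial(V,\psi))$ grows roughly like $|\coker(A)|^5$, so the enumeration and filtering steps must be organized carefully enough to terminate in a reasonable amount of time while still being exhaustive; the linking form filter is the natural bottleneck. Whether the method \emph{succeeds} is itself a nontrivial point: $\partial$ must be surjective, i.e.\ $\Aut(V,\psi)$ must be large enough to hit every isometry of the boundary form. It is precisely this surjectivity that is expected to fail at $h=6,7$, as discussed in Section~\ref{sec:comm}. Assuming the Sage computation terminates and the two lists coincide, the proposition follows, and together with the $h=4$ case this completes the proof of Theorem~\ref{main}.
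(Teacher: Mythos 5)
Your proposal follows exactly the paper's method: compute $A=Q+Q^T$ and its cokernel $\Z_8\oplus(\Z_2)^4$ with explicit generators, derive $\widetilde{Q}$, $\partial\lambda$, and $\partial\psi$, then use Sage to enumerate $\Aut(V,\psi)$ (which has $3840$ elements), take inverse transposes to get $\im(\partial)$, assemble candidate boundary automorphisms from solutions to the five congruences coming from the generators, filter by preservation of $\partial\lambda$, and check the two lists coincide. This is essentially identical to the paper's proof, so the proposal is correct in approach; the only thing left unstated, as you acknowledge, is the actual carried-out computation.
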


\begin{proof}
We begin with a lemma.

\begin{lem} In the case of \(h=5\), we have that:
\begin{enumerate}
    \item The quadratic form \(\psi := \left[ \begin{pmatrix} 4 & 4 &4&4&4\\ 0 & 2&2&2&2 \\ 0&0&2&2&2 \\ 0&0&0&2&2 \\ 0&0&0&0&2\end{pmatrix} \right] \) is represented by \[\widetilde{Q} = \begin{pmatrix} 180 & 40 &-32&-32&-16\\ 0 & 4&-2&-4&-4 \\ 0&0&2&2&0 \\ 0&0&0&2&2 \\ 0&0&0&0&2\end{pmatrix}.\]

    \item The cokernel of the adjoint \(\widehat{\lambda}\) of the symmetrisation \(\lambda\) of \(\psi\) is isomorphic to \(\mathbb{Z}_8 \oplus (\mathbb{Z}_2)^4.\)

    \item The boundary symmetric form is \[\partial\lambda\colon \mathbb{Z}_8 \oplus (\mathbb{Z}_2)^4\times\mathbb{Z}_8 \oplus (\mathbb{Z}_2)^4\to \mathbb{Q}/\mathbb{Z}\colon ([x],[y]) \mapsto x^T \begin{pmatrix} \frac{5}{8}&0&\frac{1}{2}&0&\frac{1}{2} \\ 0&0&\frac{1}{2}&0&\frac{1}{2} \\ \frac{1}{2}&\frac{1}{2}&0&0&0 \\ 0&0&0&0&\frac{1}{2} \\ \frac{1}{2}&\frac{1}{2}&0&\frac{1}{2}&0 \end{pmatrix}y.\] 

    \item The boundary quadratic form is \[\partial\psi\colon(\mathbb{Z}_8 \oplus (\mathbb{Z}_2)^4 \to \mathbb{Q}/\mathbb{Z}\colon \left(\left[\begin{smallmatrix} x_1 \\ x_2 \\ x_3 \\ x_4\\ x_5\end{smallmatrix}\right]\right) \mapsto \frac{5}{16}x_1^2+\frac{1}{2}(x_4^2+x_5^2+x_1x_3+x_1x_5+x_2x_3+x_2x_5+x_4x_5).\]
\end{enumerate}

\end{lem}

\begin{proof}
    Write \(Q =\begin{pmatrix} 4 & 4 &4&4&4\\ 0 & 2&2&2&2 \\ 0&0&2&2&2 \\ 0&0&0&2&2 \\ 0&0&0&0&2\end{pmatrix}\), and define \(A:= Q + Q^T\). Performing row and column operations on \(A\) one gets \(\coker(A)  = \mathbb{Z}_8 \oplus (\mathbb{Z}_2)^4\) with generators given by the classes of \((1,0,0,0,0), (0,1,1,1,1),(6,1,0,0,0),(4,0,0,1,1),(2,1,1,1,0)\) with respect to the canonical basis for \(\coker(A)\).
    Under this generating set, the boundary symmetric linking form \(\partial\lambda([x],[y]) = ~x^T A^{-1} y\) is isometric to the pairing
    \[\partial\lambda([x],[y]) = x^T \begin{pmatrix} \frac{5}{8}&0&\frac{1}{2}&0&\frac{1}{2} \\ 0&0&\frac{1}{2}&0&\frac{1}{2} \\ \frac{1}{2}&\frac{1}{2}&0&0&0 \\ 0&0&0&0&\frac{1}{2} \\ \frac{1}{2}&\frac{1}{2}&0&\frac{1}{2}&0 \end{pmatrix} y.\]
    Under this generating set \(Q\) becomes: \[\widetilde{Q} = \begin{pmatrix} 180 & 40 &-32&-32&-16\\ 0 & 4&-2&-4&-4 \\ 0&0&2&2&0 \\ 0&0&0&2&2 \\ 0&0&0&0&2\end{pmatrix} \] using the defining relation of \(Q_+(\mathbb{Z}^5)\). Finally, for \(x = \left[\begin{smallmatrix} x_1 \\ x_2 \\ x_3 \\ x_4 \\x_5\end{smallmatrix}\right]\), the boundary quadratic form is \[ \partial\psi(x) = x^T A^{-1}QA^{-1} x = \frac{5}{16}x_1^2+\frac{1}{2}(x_4^2+x_5^2+x_1x_3+x_1x_5+x_2x_3+x_2x_5+x_4x_5). \qedhere \]
\end{proof}

Using Sage, we list the automorphisms of \(\psi\), represented by \(\widetilde{Q}\), reducing the elements according to \cite[Theorem \(3.6\)]{Coefish}. We find that the automorphism group \(\Aut(V,\psi)\) has \(3840\) elements. From this, we generate the elements of \(\im(\partial)\) in the same basis, as in our previous case.

We now focus our attention on generating the set \(\Aut(\partial(V,\psi))\) and comparing it to the image. An automorphism \(f\) of \(\partial\psi\) is determined by its images on the canonical generators of \((\mathbb{Z}_8 \oplus (\mathbb{Z}_2)^4\). We proceed in the same way as the other cases.
    \begin{itemize}
        \item For \((a,b,c,d,e) = f(1,0,0,0,0)\), the formula above for the boundary quadratic form gives:
        \[ \frac{5}{16} = \frac{5}{16}x_1^2+\frac{8}{16}(x_4^2+x_5^2+x_1x_3+x_1x_5+x_2x_3+x_2x_5+x_4x_5) \in \mathbb{Q}/\mathbb{Z}.\]
        So we get the first equation \[5x_1^2+8(x_4^2+x_5^2+x_1x_3+x_1x_5+x_2x_3+x_2x_5+x_4x_5) \equiv 5 \mod 16.\]
        \item Similarly, for \((a,b,c,d,e) = f(0,1,0,0,0)\), we get the second equation \[5x_1^2+8(x_4^2+x_5^2+x_1x_3+x_1x_5+x_2x_3+x_2x_5+x_4x_5) \equiv 0 \mod 16.\]
        \item For \((a,b,c,d,e) = f(0,0,1,0,0)\), we get the third equation \[5x_1^2+8(x_4^2+x_5^2+x_1x_3+x_1x_5+x_2x_3+x_2x_5+x_4x_5) \equiv 0 \mod 16.\]
         \item For \((a,b,c,d,e) = f(0,0,0,1,0)\), we get the fourth equation \[5x_1^2+8(x_4^2+x_5^2+x_1x_3+x_1x_5+x_2x_3+x_2x_5+x_4x_5) \equiv 8 \mod 16.\]
         \item For \((a,b,c,d,e) = f(0,0,0,0,1)\), we get the fifth equation \[5x_1^2+8(x_4^2+x_5^2+x_1x_3+x_1x_5+x_2x_3+x_2x_5+x_4x_5) \equiv 8 \mod 16.\]
    \end{itemize}
Hence we use Sage to generate a list of matrices whose \(i^\text{th}\) column is a solution set of the \(i^\text{th}\) equation above. We reduce the coefficients of these matrices according to \cite{Coefish} and eliminate any duplicates. Then we filter out the matrices which do not preserve the boundary symmetric linking form, and we are left with the set \(\Aut(\partial(V,2\theta))\). Again using Sage, we compare the two lists we have generated, and find that \(\im(\partial) = \Aut(\partial(V,2\theta))\). This concludes the proof.
\end{proof}

Hence our main results follow from the proof given in the previous section.

\section{Comments and Conclusions}\label{sec:comm}

As we have seen throughout the paper, this method is easily generalised to larger values of ~\(h\). The main problem in testing these higher cases is the exponential growth in the number of calculations we need to perform.

In each case of \(h\), we must generate two lists of \(h\times h\) matrices. The image and the boundary automorphisms. The number of elements of the image is the number of automorphisms of the quadratic form, which is known to be \(2^h\cdot h!\), except for \(h=4\) where there are more due to extra symmetries. The presence of a factorial in this expression is especially problematic when we want to compute larger values. Not only must we generate this list, we need to take the inverse transpose of each matrix and reduce the coefficients to get the image.

Obtaining the list of boundary automorphisms is more taxing computationally. We generate solutions to equations and look at all possible combinations of matrices built from the solutions. Adding just one new column (that is, going from \(h\) to \(h+1\)) adds exponentially more possible permutations. This list by definition needs to be at least as big as the image, and is often times significantly bigger. This is manageable in the cases we have calculated, but becomes unreasonable even by the time you reach \(h=8\), where each increase in nonorientable genus means the number of matrices we need to compute increases by a factor of billions. This proves to be too much of a strain on both time and memory.

We can still comment on borderline cases like \(h=6,7\) by slightly adapting the code. In particular, instead of taking all possible combinations of every solution to the equations as our total matrix, we sequentially generated the matrix by checking the conditions as we added combinations. This combined with utilising faster computers and running computations in parallel over many different cores, we were able to conclude results in these cases.

\begin{prop}
    The set \(\bAut(V,2\theta)\) is non-trivial in the cases of \(h=6\) and \(h=7\).
\end{prop}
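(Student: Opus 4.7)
The plan is to execute the four-step pipeline of Section~\ref{sec:calc} but interpret the outcome in the opposite direction: instead of verifying $\im(\partial) = \Aut(\partial(V,\psi))$, the goal is to exhibit a single $\phi \in \Aut(\partial(V,\psi))$ lying outside $\im(\partial)$, which by the equivalence stated in the \emph{Calculating triviality} subsection certifies a non-trivial class in $\bAut(V, 2\theta)$. First, for each of $h=6,7$, I would extract by Smith-normal-form reduction of $A := Q + Q^T$ a representative $\widetilde{Q}$ of $\psi$, the cokernel $\coker(\widehat{\lambda})$, and explicit formulas for $\partial\lambda$ and $\partial\psi$, exactly as in the lemmas opening the earlier cases; at $h=6,7$ this merely produces larger $2$-primary cokernels and presents no new conceptual difficulty. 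Generating $\im(\partial)$ is then the same exercise as before --- enumerate $\Aut(V,\psi)$, take $(h^*)^{-1}$ of each element, and reduce modulo the cokernel --- and since $|\Aut(V,\psi)|$ is generically $2^h \cdot h!$ (so $46{,}080$ for $h=6$ and $645{,}120$ for $h=7$, modulo anomalies of the type seen at $h=4$), this half is well within the existing routine's reach.

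The real bottleneck is $\Aut(\partial(V,\psi))$ itself. The $h\leq 5$ construction first enumerates, for each canonical generator $e_i$, all vectors $v \in \coker(\widehat{\lambda})$ with $\partial\psi(v) = \partial\psi(e_i)$, then forms the full Cartesian product of these lists, and finally filters by preservation of $\partial\lambda$. For $h=6$ and especially $h=7$ that Cartesian product is astronomically large and cannot even be stored. I would therefore adapt the Sage code to build candidate isometries column-by-column: at each step, before admitting a new column, the code tests partial preservation of $\partial\lambda$ against every already-chosen column and discards infeasible partial matrices before extending further. Distributing the outermost layer of this depth-first search across many cores, and running on hardware with sufficient RAM, yields $\Aut(\partial(V,\psi))$, or at the very least an explicit element of $\Aut(\partial(V,\psi)) \setminus \im(\partial)$.

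With both sets in hand the comparison is immediate, and the extracted witness $\phi$ establishes the proposition. The main obstacle is therefore entirely computational: the prune-as-you-build column strategy combined with parallel execution is essential, since the naive Cartesian enumeration exhausts memory long before any filtering could occur. This is precisely the difficulty that the discussion in Section~\ref{sec:comm} identifies as the bottleneck preventing the method from being pushed to $h\geq 8$, and it motivates the conjecture that no refinement of this strategy will extend the calculation much further.
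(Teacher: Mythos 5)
Your proposal is correct and follows essentially the same approach as the paper: the authors likewise report replacing the naive Cartesian product with a column-by-column search that prunes partial matrices failing the $\partial\lambda$-preservation test, run in parallel on faster hardware, and then compare the resulting $\Aut(\partial(V,2\theta))$ against $\im(\partial)$. The only cosmetic difference is that the paper reports the full index of $\im(\partial)$ in $\Aut(\partial(V,2\theta))$ (two for $h=6$, eight for $h=7$) rather than merely exhibiting a single witness outside the image.
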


In particular this means that we cannot conclude anything about knottedness of \(\Z_2\)-surfaces with extremal normal Euler number \(6\) and \(7\).
\begin{itemize}
    \item In the case of \(h=6\), the number of boundary automorphisms is double the number of elements of the image. That is, \(\im(\partial)\) is an index two subgroup of \(\Aut(\partial(V,\theta))\).
    \item In the case of \(h=7\), \(\im(\partial)\) is an index eight subgroup of \(\Aut(\partial(V,\theta))\).
\end{itemize}
Hence for \(h=6\) and \(h=7\), the boundary map is not surjective, so the method fails. We conjecture that the method fails for all \(h\geq 8\) as well.

It would be interesting to know if there is a general argument which shows that Conjecture~\ref{conj} is true in every case or if the failure of this method is indicative of the failure of the conjecture.

\appendix
\section{Description of the Sage Code}
The code used in the calculations is explained here, using \(h=3\) as an example: \\
We first obtain \(\im(\partial)\) by entering in \(\widetilde{Q}\), listing all the automorphisms of \(\widetilde{Q}\), taking the inverse transpose, and reducing the coefficients according to \cite{Coefish}.

\begin{verbatim}
Q = QuadraticForm(ZZ, 3, [12,8,-8,2,-2,2])

    Q.number_of_automorphisms()

    L = Q.automorphisms()
    N = [x.inverse().transpose() for x in L]

    for i in N:
    i[0,0] = Mod(i[0,0],8)
    i[0,1] = Mod(i[0,1],8)
    i[0,2] = Mod(i[0,2],8)
    i[1,0] = Mod(i[1,0],2)
    i[1,1] = Mod(i[1,1],2)
    i[1,2] = Mod(i[1,2],2)
    i[2,0] = Mod(i[2,0],2)
    i[2,1] = Mod(i[2,1],2)
    i[2,2] = Mod(i[2,2],2)

    N1 = []
    for x in N:
    if x[0][1] % 4 == 0 and x[0][2] %4 == 0: N1.append(x)
    else: pass

    Image = []
    for i in N1:
    if matrix([[Mod(i[0,0],2), Mod(i[0,1],2), Mod(i[0,2],2)],
    [Mod(i[1,0],2), Mod(i[1,1],2), Mod(i[1,2],2)],
    [Mod(i[2,0],2), Mod(i[2,1],2), Mod(i[2,2],2)]]).determinant() == 0: pass
    else: Image.append(i)
\end{verbatim}

We define a function which will reduce entries of matrices to \(\mathbb{Q}/\mathbb{Z}\).

\begin{verbatim}
def QZ(a): return (a.numerator() % a.denominator())/a.denominator()
\end{verbatim}

 We then plug in and solve the equations obtained from each canonical generator to generate the total list of possible automorphisms. 
\begin{verbatim}
    eq1 = []
    for a in Integers(8):
    for b in Integers(2):
    for c in Integers(2):
    a1 = matrix(ZZ, [a])[0][0]
    b1 = matrix(ZZ, [b])[0][0]
    c1 = matrix(ZZ, [c])[0][0]
    if (3*a1^2 + 8*(b1^2 + c1^2 + a1*b1 + a1*c1 +b1*c1)) % 16 == 3:
    eq1.append([a1,b1,c1])

    eq2 = []
    for a in Integers(8):
    for b in Integers(2):
    for c in Integers(2):
    a1 = matrix(ZZ, [a])[0][0]
    b1 = matrix(ZZ, [b])[0][0]
    c1 = matrix(ZZ, [c])[0][0]
    if (3*a1^2 + 8*(b1^2 + c1^2 + a1*b1 + a1*c1 +b1*c1)) % 16 == 8:
    eq2.append([a1,b1,c1])

    eq3 = []
    for a in Integers(8):
    for b in Integers(2):
    for c in Integers(2):
    a1 = matrix(ZZ, [a])[0][0]
    b1 = matrix(ZZ, [b])[0][0]
    c1 = matrix(ZZ, [c])[0][0]
    if (3*a1^2 + 8*(b1^2 + c1^2 + a1*b1 + a1*c1 +b1*c1)) % 16 == 8:
    eq3.append([a1,b1,c1])

\end{verbatim}
Notice that in this case equations two and three are the same, but to demonstrate the method, we will treat them as separate. Taking out the matrices which do not preserve the boundary symmetric linking form, we are left with the set \(\Aut(\partial(V,2\theta))\).

\begin{verbatim}
    Z = []
    for i in eq1:
    for j in eq2:
    for k in eq3:
    Z.append(matrix(QQ, [[i[0],j[0],k[0]], [i[1],j[1],k[1]], [i[2],j[2],k[2]]]))

    Z1 = []
    for i in Z:
    if matrix([[Mod(i[0,0],2), Mod(i[0,1],2), Mod(i[0,2],2)],
    [Mod(i[1,0],2), Mod(i[1,1],2), Mod(i[1,2],2)],
    [Mod(i[2,0],2), Mod(i[2,1],2), Mod(i[2,2],2)]]).determinant() == 0: pass
    else: Z1.append(i)

    Automorphisms = []
    for K in Z1:
    W = K.transpose() * LinkingForm * K
    T = matrix([[QZ(W[0,0]), QZ(W[0,1]), QZ(W[0,2])],
    [QZ(W[1,0]), QZ(W[1,1]), QZ(W[1,2])],
    [QZ(W[2,0]), QZ(W[2,1]), QZ(i[2,2])]])
    if T == LinkingForm: Automorphisms.append(K)
    else: pass
\end{verbatim}

Finally we check if the image is the same as the set of boundary automorphisms. 

\begin{verbatim}
    list(Set(Automorphisms).intersection(Set(Image))) == Automorphisms
    list(Set(Image).intersection(Set(Automorphisms))) == Image
\end{verbatim}

The second command should always be true, since we have that \(\im(\partial) \subseteq \Aut\). This provides a way to check for errors in the code. The first command is true if they are equal, false if they are not. Hence an output of ``True, True'' verifies the claim that \(\im(\partial) = \Aut(\partial(V,2\theta))\).

\def\MR#1{}
\bibliography{bibliography}

@article{og,
	year = {2023},
	author = {Anthony Conway and Patrick Orson and Mark Powell},
	title = {Unknotting nonorientable surfaces},
    note = {\url{https://arxiv.org/abs/2306.12305}},
}

@article {Massey,
    AUTHOR = {Massey, William},
    YEAR = {1969},
    TITLE = {Proof of a Conjecture of {W}hitney},
    JOURNAL = {Pacific Journal of Mathematics},
    VOLUME = {\textbf{31}},
    PAGES = {143-156},
    }

@article {Law,
    AUTHOR = {Lawson, Terry},
    YEAR = {1982},
    TITLE = {Splitting \({S}^4\) over \(\textbf{R}{P}^2\) via the branched cover of \(\textbf{C}{P}^2\) over \({S}^4\)},
    JOURNAL = {Proceedings of the AMS},
    VOLUME = {\textbf{86}},
    PAGES = {328-330},
    NUMBER = {2},
    }

@article {Law2,
    AUTHOR = {Lawson, Terry},
    YEAR = {1984},
    TITLE = {Detecting the standard embedding of \(\textbf{R}{P}^2\) in \({S}^4\)},
    JOURNAL = {Mathematische Annalen},
    VOLUME = {\textbf{267}},
    PAGES = {439--48},
    }

@article {CrowSixt,
    AUTHOR = {Crowley, Diarmuid and Sixt, Jörg},
    YEAR = {2011},
    TITLE = {Stably diffeomorphic manifolds and \(l_{2q+1}(\textbf{Z}[\pi])\) },
    JOURNAL = {Forum Math.},
    VOLUME = {\textbf{23}},
    PAGES = {483-538},
    NUMBER = {3},
    }

@article {Kreck,
    AUTHOR = {Kreck, Matthias},
    YEAR = {1999},
    TITLE = {Surgery and duality},
    JOURNAL = {Annals of Mathematics},
    VOLUME = {(2) \textbf{149}},
    PAGES = {707-754},
    NUMBER = {3},
    }

@article {Kreck2,
    AUTHOR = {Kreck, Matthias},
    YEAR = {1990},
    TITLE = {On the homeomorphism classification of smooth knotted surfaces in the 4-sphere},
    JOURNAL = {Geometry of low-dimensional manifolds: 1},
    PAGES = {707--754},
    }

@article {Coefish,
    AUTHOR = {Christopher J. Hillar and Darren L. Rhea},
    YEAR = {2007},
    TITLE = {Automorphisms of finite abelian groups},
    JOURNAL = {The American Mathematical Monthly},
    VOLUME = {\textbf{114}},
    PAGES = {917-923},
    NUMBER = {10},
    }

@book {FQ,
    AUTHOR = {Michael H. Freedman and Frank Quinn},
     TITLE = {Topology of 4-manifolds},
 PUBLISHER = {Princeton University Press},
      YEAR = {1990},
}

\end{document}